\theoremstyle{plain}
\newtheorem{theorem}{Theorem}[section]
\newtheorem{proposition}[theorem]{Proposition}
\newtheorem{lemma}[theorem]{Lemma}
\newtheorem{corollary}[theorem]{Corollary}
\theoremstyle{definition}
\newtheorem{definition}[theorem]{Definition}
\newtheorem*{definition*}{Definition}
\newtheorem{remark}[theorem]{Remark}
\newtheorem{example}[theorem]{Example}
\numberwithin{equation}{section}
\newcommand{\bigP}[1]{\mathcal{P}_{#1}}
\newcommand{\multfreeP}[2]{\mathcal{M}_{#1,#2}}
\newcommand{\PNl}{\multfreeP{N}{\ell}}
\newcommand{\ribbons}[2]{\mathcal{R}_{#1,#2}}
\newcommand{\RNl}{\ribbons{N}{\ell}}
\newcommand{\leqs}{\leq_s}
\newcommand{\geqs}{\geq_s}
\newcommand{\les}{<_s}
\newcommand{\sh}{\mathit{sh}}
\newcommand{\rows}[1]{\mathrm{rows}(#1)}
\newcommand{\cols}[1]{\mathrm{cols}(#1)}
\newcommand{\leqdom}{\leq_{\mathit{dom}}}
\newcommand{\ledom}{<_{\mathit{dom}}}
\newcommand{\comment}[1]{\vspace{5 mm}\par \noindent
\marginpar{\textsc{Comment}}
\framebox{\begin{minipage}[c]{0.95 \textwidth}
 #1 \end{minipage}}\vspace{5 mm}\\}
\renewcommand{\comment}[1]{}
\newcommand{\new}[1]{\ensuremath{\blacktriangleright}#1\ensuremath{\blacktriangleleft}}
\renewcommand{\new}[1]{#1}
\begin{document}
\title[Positive ribbon differences]{Positivity results on ribbon Schur function differences}

\author{Peter R. W. McNamara}
\address{Department of Mathematics, Bucknell University, Lewisburg, PA 17837, USA}
\email{\href{mailto:peter.mcnamara@bucknell.edu}{peter.mcnamara@bucknell.edu}}

\author{Stephanie van Willigenburg}
\address{Department of Mathematics, University of British Columbia, Vancouver, BC V6T 1Z2, Canada}
\email{\href{mailto:steph@math.ubc.ca}{steph@math.ubc.ca}}
\thanks{The second author was supported in part by the National Sciences and Engineering Research Council of Canada.}
\subjclass[2000]{Primary 05E05; Secondary 05E10, 06A06, 20C30} 
\keywords{Littlewood-Richardson rule, multiplicity-free, ribbon, Schur positive, skew Schur function, symmetric function} 

\begin{abstract} 
There is considerable current interest in
determining when the difference of two skew Schur functions is Schur positive.  
\new{We consider the posets that result from ordering skew diagrams according to Schur positivity, before focussing on the convex subposets corresponding to ribbons.}  While the general solution for ribbon Schur functions seems out of reach at present, we determine 
necessary and sufficient conditions for multiplicity-free ribbons, 
i.e.\ those whose expansion as a linear combination of Schur functions has all coefficients either 
zero or one.  
In particular, we show that the poset that results from ordering such ribbons according to Schur-positivity is essentially a product of two chains.
\end{abstract}

\maketitle

\comment{The abstract is now more involved.  I would be fine with changing it back.}

\section{Introduction}\label{sec:intro} 

For several reasons, the Schur functions can be said to be the most interesting and important
basis for the ring of symmetric functions.  While we will study Schur functions from a combinatorial
perspective, their importance is highlighted by their 
appearance in various other areas of mathematics.  They appear in the representation theory of the
symmetric group and of the general and special linear groups.  They are intimately tied to 
Schubert classes, which arise in algebraic geometry
when studying the cohomology ring of the Grassmannian, and they are also closely 
related to the eigenvalues of Hermitian matrices.  For more information on these and other
connections see, for example, \cite{Ful1} and \cite{Ful2}.

It is therefore natural to consider the expansions of other symmetric functions in the 
basis of Schur functions.  For example, the skew Schur function $s_{\lambda/\mu}$ and the product $s_\lambda s_\mu$ of two Schur functions are two of the most famous examples of
\emph{Schur positive} functions, i.e. when expanded as a linear combination of Schur functions, all
of the coefficients are non-negative.  Schur positive functions have a particular representation-theoretic
significance: if a homogeneous symmetric function of degree $N$ is Schur positive, then it 
arises as the Frobenius image of some representation of the symmetric group $S_N$. 
Motivated by the Schur positivity of $s_\lambda s_\mu$ and $s_{\lambda/\mu}$, one might ask when expressions of the form
\[
s_\lambda s_\mu - s_\sigma s_\tau \mbox{\ \ \ \ \ or\ \ \ \ \ } s_{\lambda/\mu} - s_{\sigma/\tau}
\]
are Schur positive, and such questions have been the subject of much recent work, such as
\cite{BBR, FFLP, KWvW, Kir, LPP, LLT, McN, Oko}.  It is well-known that these questions are
currently intractable when stated in anything close to full generality.  

Putting these questions in the following general setting will help put our work in context.  
Let us first note that $s_\lambda s_\mu$ is just a special type of skew Schur function \cite[p.~339]{ECII}.
Therefore, it suffices to consider differences of the form $s_A - s_B$, where $A$ and $B$ are skew
diagrams.  We could define a reflexive and transitive binary relation on skew Schur functions by
saying that $B$ is related to $A$ if $s_A - s_B$ is Schur positive.  To make this 
relation a partial order, we need to consider those skew diagrams that yield the same
skew Schur function to be equivalent; see the sequence \cite{HDL, HDL2, HDL3} for a study
of these equivalences.  Having done this, let us say that $[B] \leqs [A]$ if 
$s_A - s_B$ is Schur positive, where $[A]$ denotes the equivalence class of $A$.  Clearly $[A]$ and 
$[B]$ will be incomparable unless $A$ and $B$ have the same number $N$ of boxes, and we 
let $\bigP{N}$ denote the poset of all equivalence classes $[A]$ where the number of boxes in $A$ is $N$.  Restricting
to skew diagrams with 4 boxes, we get the poset $\bigP{4}$ shown in Figure~\ref{fig:p4}.
\begin{figure}[htbp]
\begin{center}
\[
\scalebox{.8}{\includegraphics{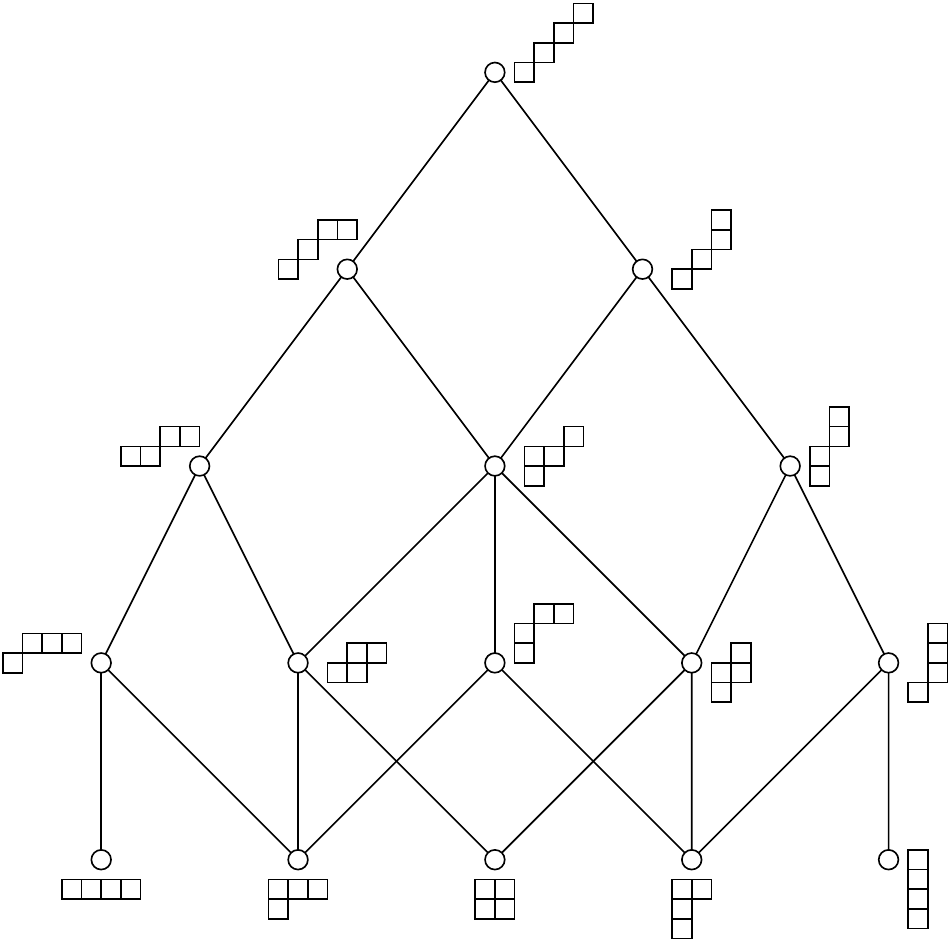}}
\]
\caption{$\bigP{4}$: All skew diagrams with 4 boxes under the Schur positivity order.  We note that $\bigP{N}$ is not graded for $N \geq 5$, and is not a join-semilattice for $N \geq 6$.}
\label{fig:p4}
\end{center}
\end{figure}
Our overarching goal when studying questions of Schur positivity and Schur equivalence
is to understand these posets.  

Our approach will be to restrict to a particular subposet of $\bigP{N}$ and derive necessary 
and sufficient conditions on $A$ and $B$ for $[B] \leqs [A]$.  This contrasts with most 
of the aforementioned papers, which studied either necessary \emph{or} sufficient conditions
for $[B] \leqs [A]$.  There are two previous examples in the literature of subposets of $\bigP{N}$
for which necessary and sufficient conditions are given.  The first example concerns 
the class of \emph{horizontal strips}
(respectively, vertical strips), which
consists of all skew diagrams with at most one box in each column (resp.\ row).  It is shown in
\cite[I.7 Example 9(b)]{Mac} that the poset that results when we restrict to horizontal (resp.\ vertical) 
 strips is exactly the dominance lattice on partitions of $N$.  The second 
 example concerns \emph{ribbons}, defined as connected skew diagrams with no 2-by-2 block of
 boxes.  In other words, ribbons  are connected skew diagrams with at most one box
 in each northwest to southeast diagonal.  
 As we will show in Lemma~\ref{lem:length}, two ribbons are incomparable in $\bigP{N}$ unless they
 have the same number of rows.  
 Restricting to ribbons whose row lengths 
 weakly decrease from top to bottom again results in dominance order, as shown in
 \cite[Theorem 3.3]{KWvW}.  More precisely, if $A$ and $B$ are ribbons with the same number of 
 rows and with row lengths weakly decreasing from top to bottom, then $[B] \leqs [A]$ if and
 only if the partition of row lengths of $A$ is less than or equal to the partition 
 of row lengths of $B$ in dominance order.
 
The next step would be to try to characterize the Schur positivity order for general ribbons.
More precisely, we would like conditions in terms of the diagrams of $A$ and $B$ that determine
whether or not $[B] \leqs [A]$.  \new{Understanding the ribbon case would give insight into many ``portions'' of $\bigP{N}$.  More precisely, we show that the subposet of $\bigP{N}$ consisting of ribbons with a fixed number of rows is a \emph{convex} subposet of $\bigP{N}$.  However, the general ribbon case is} extremely difficult: Figure~\ref{fig:ribbon94} shows 
that the set of ribbons with 9 boxes and 4 rows already yields a complicated poset.
\begin{figure}[htbp]
\begin{center}
\[
\scalebox{.5}{\includegraphics{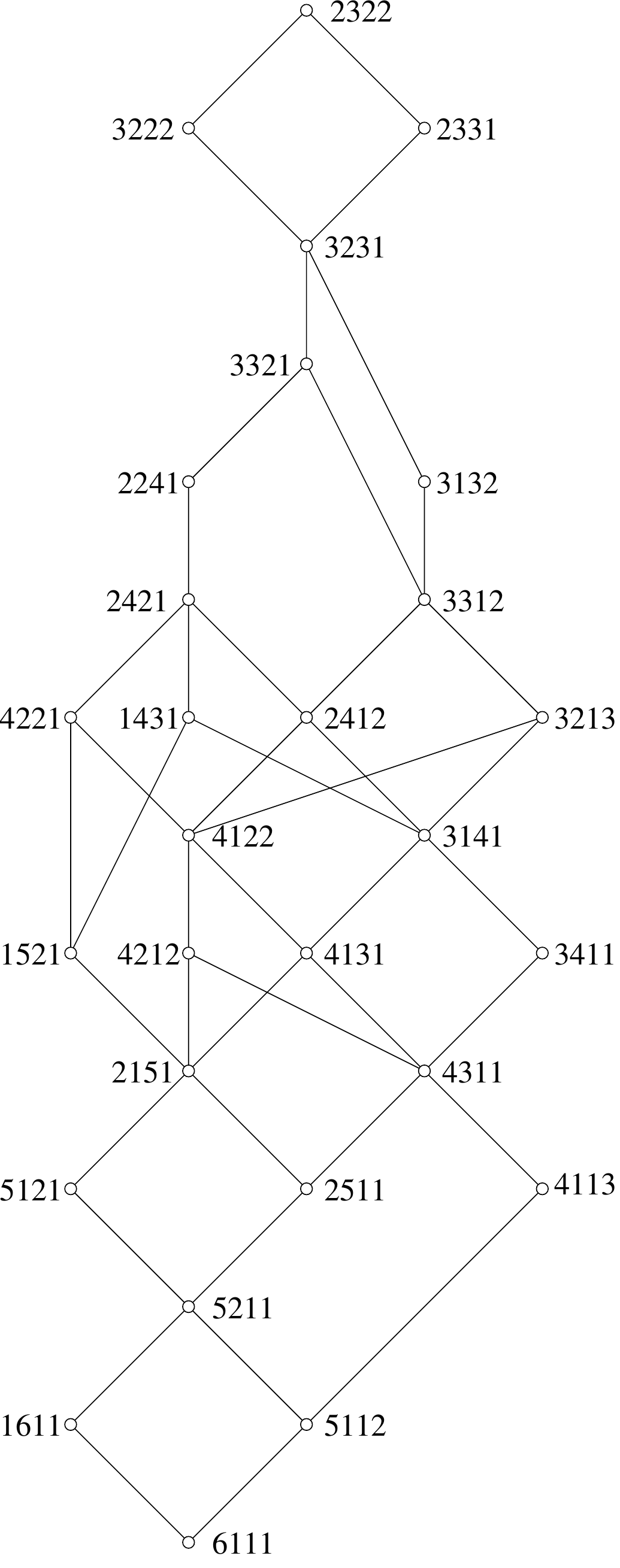}}
\]
\caption{$\ribbons{9}{4}$: all ribbons with 9 boxes and 4 rows under the Schur positivity order.
Ribbons are labelled by their sequence of row lengths, read from top to bottom.}
\label{fig:ribbon94}
\end{center}
\end{figure}

As progress towards a full characterization, we consider \emph{multiplicity-free} ribbons, i.e.\
ribbons whose corresponding skew Schur function, when expanded as a linear
combination of Schur functions, has all coefficients equal to 0 or 1.  \new{For details on the importance of  multiplicity-free linear combinations of Schur functions, see \cite{Ste} and the references given there.  For example, multiplicity-free Schur expansions correspond to multiplicity-free representations,
the many applications of which are studied in the survey article \cite{How}.}
Obviously, 
our first step is to determine which ribbons are multiplicity-free.  Conveniently, this
can be deduced from the work of Gutschwager and of Thomas and Yong; see 
Lemma~\ref{lem:multfreeribbon} for the details.  In short, a ribbon is multiplicity-free
if and only if it has at most
two rows of length greater than one and at most two columns of length greater than one.
Let $\PNl$ denote the poset that results from considering all multiplicity-free ribbons
with $N$ boxes and $\ell$ rows.  \new{As in the general ribbon case, we show that $\PNl$ appears as a convex subposet of $\bigP{N}$.}  Our main result is Theorem~\ref{th:bigdiff}, which gives
a complete description of the poset $\PNl$.  It turns out to have a particularly attractive form, 
and is only a slight modification of a product of two chains.  More precisely, $\PNl$ can be obtained from a product of two chains by removing some join-irreducible elements. As an example, Figure~\ref{fig:p12_6}
shows $\multfreeP{12}{6}$.
\begin{figure}[htbp]
\begin{center}
\[
\scalebox{.5}{\includegraphics{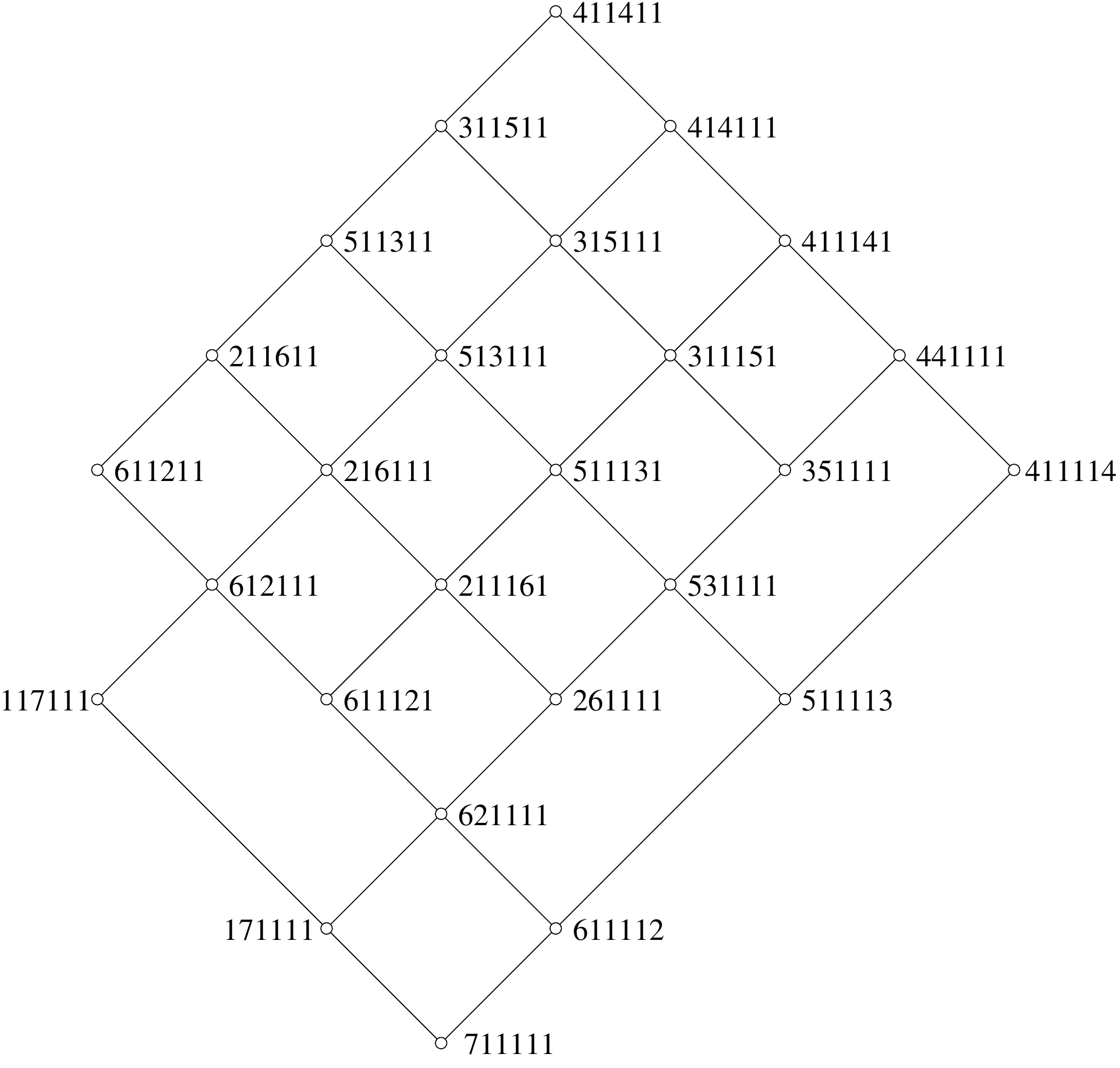}}
\]
\caption{$\multfreeP{12}{6}$: All multiplicity-free ribbons with 12 boxes and 6 rows under the Schur positivity order.  Ribbons are labelled by their sequence of row lengths, read from top to bottom.}
\label{fig:p12_6}
\end{center}
\end{figure}

The remainder of this paper is organized as follows.
In the next section, we introduce the necessarily preliminaries regarding partitions and skew Schur functions.  In Section~\ref{sec:reductions}, \new{we investigate general properties within $\bigP{N}$.  In particular,} we prove our earlier assertions about incomparability of ribbons, and we show that the sets of
ribbons and multiplicity-free ribbons each form convex subposets of $\bigP{N}$.  \new{We also show that
$\bigP{N}$, the ribbon subposets, and $\PNl$ each have a natural partition into convex subposets, where each convex subposet corresponds to a fixed multiset of row lengths.}
Section~\ref{sec:fundamental}
contains our main lemmas which determine the 
edges of the Hasse diagram of $\PNl$.  In Section~\ref{sec:main}, we reindex 
multiplicity-free ribbons in terms of certain rectangles, which allows us to state our main
result, Theorem~\ref{th:bigdiff}, fully describing all the order relations in $\PNl$.
This reindexing also explains why $\PNl$ closely resembles a product of two
chains, and it gives a simple description of the meet and join operations.  We conclude in 
Section~\ref{sec:conclusion} with some remarks about products of Schubert classes and 
a lattice-theoretic property of $\PNl$.  

\subsection{Acknowledgments} The authors would like to thank Hugh Thomas for helpful comments.
The Littlewood-Richardson calculator \cite{BucSoftware} and the posets package \cite{SteSoftware} were used for data generation.

\section{Preliminaries}\label{sec:prelim} 

\subsection{Partitions and diagrams}\label{subsec:partitions}

We begin by reviewing some notions concerning partitions. We say that a list of positive integers $\lambda =(\lambda _1,\lambda _2,\ldots ,\lambda _{\ell(\lambda)})$ is a \emph{partition} of a positive integer $N$ if $\lambda _1\geq \lambda _2 \geq \cdots \geq \lambda _{\ell(\lambda)}>0$ and $\sum _{i=1}^ {\ell(\lambda)} \lambda _i = N$. We denote this by $\lambda \vdash N$. We call $\ell(\lambda)$ the \emph{length} of $\lambda$ and we call $N$ the \emph{size} of $\lambda$, writing
$|\lambda| = N$.
Furthermore, we call the $\lambda _i$ the \emph{parts} of $\lambda$. If $\lambda _i = \lambda _{i+1}=\cdots = \lambda _{i+j-1}=a$ then we will denote the sublist $\lambda _i, \ldots, \lambda _{i+j-1}$ by $a^j$. For convenience we denote by $\emptyset$ the unique partition of length and size 0. Two partial orders that exist on partitions are
\begin{enumerate}
\item 
the \emph{inclusion order}: $\mu \subseteq \lambda$ if $\mu_i \leq \lambda_i$ for all $i=1,2,\ldots ,\ell(\mu)$,
\item
the \emph{dominance}  \emph{order}: Given $\lambda, \mu \vdash N$, $\mu \leqdom \lambda$ if 
$$
\mu_1 + \mu_2 + \cdots + \mu_i \leq \lambda_1 + \lambda_2 + \cdots + \lambda_i
$$
for $i=1,2,\ldots,\min\{\ell(\mu),\ell(\lambda)\}$.
\end{enumerate}
We say that a list of positive integers $\alpha =(\alpha _1,\alpha _2,\ldots ,\alpha _{\ell(\alpha)})$ is a \emph{composition} of $N$ if $\sum _{i=1}^{\ell(\alpha)} \alpha _i = N$. We denote this by $\alpha \vDash N$. As with partitions, a composition has length $\ell(\alpha)$ and size $|\alpha|$ with parts $\alpha _i$. We denote by $\alpha ^\ast$ the composition whose parts are the parts of $\alpha$ listed in reverse order, i.e. $\alpha ^\ast = (\alpha _{\ell(\alpha)},\ldots ,\alpha _2, \alpha _1)$.  Observe that every partition is a composition and that every composition \emph{determines a partition} $\lambda (\alpha)$, which is obtained by reordering the parts of $\alpha $ in weakly decreasing order.

Given a partition $\lambda =(\lambda _1,\lambda _2,\ldots ,\lambda _{\ell(\lambda)})$, we associate to it a \emph{diagram}, also denoted by $\lambda$, which consists of $\lambda _1$ left-justified boxes in the top row, $\lambda _2$ left-justified boxes in the second from top row etc. Given two partitions $\lambda,  \mu$ such that $\mu \subseteq \lambda$ we can associate to it a \emph{skew diagram} denoted by $\lambda/\mu$, which is obtained from the diagram $\lambda$ by removing the leftmost $\mu _i$ boxes from the $i$th row from the top, for $i=1,\ldots, \ell(\mu)$.

\begin{example}\label{ex:smallribbon}
The skew diagram for $A = \lambda/\mu = (4,3,3)/(2,2)$ is 
\setlength{\unitlength}{4mm}
\[
\begin{picture}(4,3)(0,0)
\put(2,3){\line(1,0){2}}
\put(2,2){\line(1,0){2}}
\put(0,1){\line(1,0){3}}
\put(0,0){\line(1,0){3}}
\multiput(2,3)(1,0){3}{\line(0,-1){1}}
\multiput(2,2)(1,0){2}{\line(0,-1){1}}
\multiput(0,1)(1,0){4}{\line(0,-1){1}}
\put(4.3,1.5){.}
\end{picture}
\]
\end{example}

There are two further partitions naturally associated with a skew diagram $A$.  We let $\rows{A}$
(resp.\ $\cols{A}$) denote the sequence of row (resp.\ column) lengths of $A$ ordered
into weakly decreasing order, and $\# \rows{A}$ (resp.\ $\#\cols{A}$) denote the number of rows (resp.\ columns) of non-zero length.  In Example~\ref{ex:smallribbon}, $\rows{A} = (3,2,1)$ and
$\cols{A} = (3,1,1,1)$. 
\setlength{\unitlength}{1.5mm}
We describe a skew-diagram as \emph{connected} if its boxes are edgewise connected, and we call it a \emph{ribbon} if it is connected and contains no subdiagram $(2,2)=
\begin{picture}(2,2)(0,0.4)
\multiput(0,0)(0,1){3}{\line(1,0){2}}
\multiput(0,0)(1,0){3}{\line(0,1){2}}
\end{picture}$\,.
Note that there exists a natural bijection $\psi$ between compositions of size $N$ and ribbons with $N$ boxes that takes the composition $\alpha =(\alpha _1,\alpha _2,\ldots ,\alpha _{\ell(\alpha)})$ and sends it to the unique ribbon that has $\alpha _i$ boxes in the $i$th row from the top. For example the skew diagram in Example~\ref{ex:smallribbon} is a ribbon and corresponds to the composition $(2,1,3)$. For ease of notation we will often refer to a ribbon by its corresponding composition from now on.

We conclude this subsection with two operations on skew diagrams. The first is antipodal rotation, Given a skew diagram $\lambda /\mu$ we  form its \emph{antipodal rotation} $(\lambda /\mu)^\ast$ by rotating $\lambda /\mu$ by 180 degrees in the plane. Observe that if $\lambda /\mu$ is a ribbon corresponding to a composition $\alpha$ then $\psi(\alpha ^\ast)=(\lambda/\mu)^\ast$. The second operation is transposition. Given a diagram $\lambda$ we  form its \emph{transpose} $\lambda^t$ by letting the leftmost column of $\lambda ^t$ have $\lambda _1$ boxes, the second from leftmost column have $\lambda _2$ boxes etc. We then extend this to skew diagrams by $(\lambda/\mu)^t:=\lambda ^t/\mu^t$.

\begin{example}\label{ex:transpose} If $\lambda/\mu=(4,3,3)/(2,2)$ then
\setlength{\unitlength}{4mm}
\vspace{-2.5\unitlength}
\[
(\lambda/\mu)^\ast = 
\begin{picture}(4,3)(0,1.5)
\put(1,3){\line(1,0){3}}
\put(1,2){\line(1,0){3}}
\put(0,1){\line(1,0){2}}
\put(0,0){\line(1,0){2}}
\multiput(1,3)(1,0){4}{\line(0,-1){1}}
\multiput(1,2)(1,0){2}{\line(0,-1){1}}
\multiput(0,1)(1,0){3}{\line(0,-1){1}}
\end{picture}
\quad\mbox{ and }\quad
(\lambda/\mu)^t =
\begin{picture}(3,4)(0,2)
\put(2,4){\line(1,0){1}}
\put(2,3){\line(1,0){1}}
\put(0,2){\line(1,0){3}}
\put(0,1){\line(1,0){3}}
\put(0,0){\line(1,0){1}}
\multiput(2,4)(1,0){2}{\line(0,-1){3}}
\multiput(0,2)(1,0){2}{\line(0,-1){2}}
\end{picture}\ .
\]
\vspace{\unitlength}
\end{example}

\subsection{Schur functions and skew Schur functions}\label{subsec:schur}

In this subsection we review necessary facts pertaining to the algebra of symmetric functions. We begin with tableaux. 

Consider a skew diagram $\lambda/\mu$. 
We say that we have a \emph{semi-standard Young tableau (SSYT)}, $T$, of \emph{shape} $\sh(T):=\lambda/\mu$ if the boxes of $\lambda /\mu$ are filled 
with positive integers such that
\begin{enumerate}
\item the entries of each row weakly increase when read from left to right,
\item the entries of each column strictly increase when read from top to bottom.
\end{enumerate}

\begin{example}\label{ex:ssyt}
The following is an SSYT of shape $\lambda/\mu = (4,3,3)/(2,2)$:
\setlength{\unitlength}{4mm}
\[
\begin{picture}(4,3)(0,0)
\put(2,3){\line(1,0){2}}
\put(2,2){\line(1,0){2}}
\put(0,1){\line(1,0){3}}
\put(0,0){\line(1,0){3}}
\multiput(2,3)(1,0){3}{\line(0,-1){1}}
\multiput(2,2)(1,0){2}{\line(0,-1){1}}
\multiput(0,1)(1,0){4}{\line(0,-1){1}}
\put(2.3,2.25){1}
\put(3.3,2.25){1}
\put(2.3,1.25){2}
\put(2.3,0.25){3}
\put(1.3,0.25){2}
\put(0.3,0.25){1}
\put(4.3,1.5){.}
\end{picture}
\]
\end{example}

Given an SSYT, $T$, we define its \emph{reading word}, $w(T)$, to be the entries of $T$ read from right to left and top to bottom. If, for all positive integers $i$ and $j$, the first $j$ letters of $w(T)$ includes at least as many $i$'s as $(i+1)$'s, 
then we say that $w(T)$ is \emph{lattice}. If we let $c_i(T)$ be the total number of $i$'s appearing in $T$, and so also in $w(T)$, then the list $c(T):=(c_1(T),c_2(T),\ldots)$ is called the \emph{content} of $T$ and also of $w(T)$.  For the SSYT $T$ in Example~\ref{ex:ssyt}, $c(T)=(3,2,1)$ and $w(T)=(1,1,2,3,2,1)$, and one can check that $w(T)$ is lattice.

With this in mind we can now define Schur functions and skew Schur functions.
For $\lambda \vdash N$, the Schur function $s_\lambda$ in the variables $x_1,x_2,\ldots$
is defined by
$$s_\lambda :=\sum _T x^T$$ where the sum is over all SSYT $T$ with $\sh(T)=\lambda$, and $x^T:=x_1^{c_1(T)}x_2^{c_2(T)}\cdots$.  We also let $s_{\emptyset}=1$.
It can be shown that $s_\lambda$ is symmetric in its variables $x_1, x_2,\ldots$.  Furthermore, working over $\mathbb{Q}$ say, the set $\{s_\lambda\}_{\lambda \vdash N}$ spans the space $\Lambda^N$ consisting
of all homogeneous symmetric functions of degree $N$ in the variables $x_1, x_2, \ldots$.
We also define the algebra of symmetric functions by $\Lambda := \bigoplus _{N\geq 0} \Lambda ^N$.
By extending our indexing set from diagrams to skew diagrams we create \emph{skew Schur functions}
$$s_{\lambda/\mu} :=\sum _T x^T\in \Lambda$$ where the sum is over all SSYT $T$ with $\sh(T)=\lambda/\mu$. If $\lambda/\mu$ is a ribbon then we call $s_{\lambda /\mu}$ a \emph{ribbon Schur function} and denote it by $r(\alpha)$, where $\alpha$ is the composition satisfying $\psi(\alpha)=\lambda/\mu$. 

It is a well-known fact that the $\{ s_\lambda \} _{\lambda \vdash N}$ not only span $\Lambda^N$ but are, in fact, a basis. Hence a natural question to ask is how does a skew Schur functions expand in terms of Schur functions. The answer is provided by the \emph{Littlewood-Richardson rule} which states
\begin{equation}\label{eq:lrrule}
s_{\lambda /\mu}=\sum _\nu c^\lambda _{\mu\nu} s_\nu
\end{equation}where $c^\lambda _{\mu\nu}$ is the number of SSYT with $\sh(T)=\lambda /\mu$ such that
\begin{enumerate}
\item $c(T)=\nu$,
\item $w(T)$ is lattice.
\end{enumerate}
For this reason, we will call an SSYT $T$ such that $w(T)$ is lattice a \emph{Littlewood-Richardson filling}, or \emph{LR-filling} for short.
If $c^\lambda _{\mu\nu}$ is $0$ or $1$ for all $\nu$ then we say $s_{\lambda /\mu}$ is \emph{multiplicity-free} and also that $\lambda/\mu$ is \emph{multiplicity-free}.

\begin{example}\label{ex:lrrule}
$$s_{(3,2,1)/(2,1)}=s_{(3)}+2s_{(2,1)}+s_{(1,1,1)}$$and
$$s_{(2,2)/(1)}=s_{(2,1)}.$$
Observe that the first example is not multiplicity-free, whereas the second example is. The second example can also be described as the ribbon Schur function $r(1,2)$.  
\end{example}

It is clear that \eqref{eq:lrrule} is a non-negative linear combination of Schur functions, which motivates our last definition.

\begin{definition}\label{def:schurpos}
If a symmetric function $f\in \Lambda$ can be written as a non-negative linear combination of Schur functions then we say that $f$ is \emph{Schur positive}. If $f$ can be written as a non-positive linear combination of Schur functions then we say that $f$ is \emph{Schur negative}. If $f$ is neither Schur positive or Schur negative then we say that $f$ is \emph{Schur incomparable}.
\end{definition} 

As an example, we know from Example~\ref{ex:lrrule} that $$s_{(3,2,1)/(2,1)}-s_{(2,2)/(1)}$$ is Schur positive.

The antipodal rotation and transpose operation that concluded the previous subsection can also
be interpreted in terms of skew Schur functions.  We first observe that skew Schur functions
are preserved under antipodal rotation.

\begin{proposition}\cite[Execrcise 7.56(a)]{ECII}\label{prop:ribreverse}
If $A$ is a skew diagram, then $s_A = s_{A^\ast}$.
\end{proposition}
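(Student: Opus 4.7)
The plan is to prove the identity by exhibiting an explicit bijection between SSYTs of shape $A$ and SSYTs of shape $A^\ast$ and then invoking the symmetry of $s_A$ in its variables. First I would fix an integer $n \geq 1$ and work with the polynomial $s_A(x_1,\ldots,x_n) = \sum_T x^T$, where $T$ ranges over SSYTs of shape $A = \lambda/\mu$ with entries in $\{1,2,\ldots,n\}$.

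Next, I would define a candidate bijection $T \mapsto T^\ast$ as follows: rotate $T$ by $180$ degrees in the plane (so its underlying skew shape becomes $A^\ast$), and then replace each entry $i$ by $n+1-i$. The central verification is that $T^\ast$ is again an SSYT. For rows, if the entries $a_1 \leq a_2 \leq \cdots \leq a_k$ appear left-to-right in a row of $T$, then after antipodal rotation they appear in reverse order and after complementation become $n+1-a_k \leq n+1-a_{k-1} \leq \cdots \leq n+1-a_1$, which is weakly increasing left-to-right in the new shape. An identical argument, with ``strictly'' replacing ``weakly,'' handles columns. The map is clearly involutive (up to applying $\ast$ to the ambient shape), hence bijective between SSYTs of shape $A$ and SSYTs of shape $A^\ast$.

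Finally, I would track content. By construction, $c_i(T^\ast) = c_{n+1-i}(T)$ for every $i$, so $x^{T^\ast} = \prod_i x_i^{c_{n+1-i}(T)}$. Summing over all $T$ and using that $s_A$ is a \emph{symmetric} function (so reversing the roles of $x_1, \ldots, x_n$ leaves it unchanged), I get
\[
s_{A^\ast}(x_1,\ldots,x_n) \;=\; \sum_T \prod_{i=1}^n x_i^{c_{n+1-i}(T)} \;=\; s_A(x_n, x_{n-1}, \ldots, x_1) \;=\; s_A(x_1, \ldots, x_n).
\]
Since this equality holds for every $n$, the two symmetric functions agree.

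I do not expect a serious obstacle here: the only step that requires a moment's thought is verifying that antipodal rotation combined with the involution $i \mapsto n+1-i$ correctly exchanges the two types of inequalities (weak along rows, strict along columns), and after that the symmetry of $s_A$ does all the remaining work.
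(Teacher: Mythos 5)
Your proof is correct. The paper gives no proof of its own here --- it simply cites \cite[Exercise 7.56(a)]{ECII} --- and your argument (rotate the tableau by $180$ degrees, complement each entry via $i \mapsto n+1-i$, check that this exchanges the weak row and strict column conditions correctly, and then invoke the symmetry of $s_A$ in $x_1,\ldots,x_n$ for every $n$) is precisely the standard bijective solution to that exercise, with all the necessary verifications in place.
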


Turning to the transpose operation, we 
recall the involution $\omega: \Lambda \rightarrow \Lambda$ defined on Schur functions by
$\omega(s_\lambda)=s_{\lambda ^t}$.  It extends to skew Schur functions to give
$$\omega(s_{\lambda/\mu})=s_{(\lambda/\mu)^t}.$$

\section{Subposets of ribbons}\label{sec:reductions}

Our goal for this section is to determine some general facts about the set of
multiplicity-free ribbons and its structure within $\bigP{N}$.  We will
consider results in decreasing order of generality: those that hold for
skew diagrams, then those that hold for ribbons, and finally those that apply to 
multiplicity-free ribbons.  In this spirit, we begin with some necessary conditions on skew diagrams $A$ and $B$ for $s_A - s_B$ to be Schur positive. 

\begin{lemma}\label{lem:size}
Let $A$ and $B$ be skew diagrams. If $|A|\neq |B|$ then $s_A - s_B$ is Schur incomparable.
\end{lemma}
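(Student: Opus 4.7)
The plan is to use the fact that skew Schur functions are homogeneous and Schur positive, combined with the linear independence of the Schur basis across different degrees.

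First, I would recall that $s_A \in \Lambda^{|A|}$ and $s_B \in \Lambda^{|B|}$, i.e.\ each $s_C$ is a homogeneous symmetric function whose degree equals the number of boxes in $C$. This is immediate from the monomial definition $s_C = \sum_T x^T$, since every SSYT of shape $C$ has exactly $|C|$ entries. Consequently, when we expand via the Littlewood-Richardson rule \eqref{eq:lrrule}, every Schur function appearing in $s_A$ is indexed by a partition of $|A|$, and every Schur function appearing in $s_B$ is indexed by a partition of $|B|$.

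Next, I would verify that both expansions are actually nonempty. Since $A$ is a skew diagram with $|A|\geq 1$, the shape $A$ admits at least one LR-filling (e.g.\ a superstandard filling), so there exists a partition $\nu$ with $|\nu|=|A|$ and $c^\lambda_{\mu\nu}\geq 1$ in the notation of \eqref{eq:lrrule}, giving $s_\nu$ a strictly positive coefficient in $s_A$. The same applies to $B$.

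Now assume $|A|\neq |B|$. Pick partitions $\nu\vdash|A|$ and $\sigma\vdash|B|$ that appear with positive coefficient in $s_A$ and $s_B$ respectively. Because $|\nu|\neq|\sigma|$, the Schur functions $s_\nu$ and $s_\sigma$ are distinct, and neither one appears in the expansion of the other's associated skew diagram (the partitions have the wrong size). Therefore in the Schur expansion of $s_A - s_B$, the coefficient of $s_\nu$ is strictly positive while the coefficient of $s_\sigma$ is strictly negative. By Definition~\ref{def:schurpos}, $s_A - s_B$ is neither Schur positive nor Schur negative, hence Schur incomparable.

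There is essentially no obstacle here; the only minor care needed is confirming that $s_A$ and $s_B$ each have at least one Schur function with a positive coefficient, which is immediate since skew Schur functions of nonempty shape are nonzero and Schur positive.
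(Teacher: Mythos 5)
Your proposal is correct and follows essentially the same route as the paper, which deduces the result in one line from the Littlewood--Richardson rule \eqref{eq:lrrule} and the observation that every $s_\lambda$ occurring in $s_A$ has $|\lambda|=|A|$. You simply make explicit the (easy) point that each expansion is nonempty, so that a strictly positive and a strictly negative coefficient both occur in $s_A - s_B$.
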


\begin{proof}
This follows immediately from \eqref{eq:lrrule} which implies that any $s_\lambda$ appearing in the Schur function expansion of $s_A$ satisfies $|\lambda |= |A|$.
\end{proof}

The next lemma will justify several upcoming deductions.

\begin{lemma}\label{lem:rowscols}
Let $A$ and $B$ be skew diagrams.  If $s_A - s_B$ is Schur positive, then 
\[
\rows{A} \leqdom \rows{B} \mbox{\ \ \ and\ \ \ } \cols{A} \leqdom \cols{B}.
\]
Furthermore, for any fixed $m$ and  $n$, 
the number of $m$-by-$n$ rectangular subdiagrams contained inside $A$ is less
than or equal to the number for $B$.  
\end{lemma}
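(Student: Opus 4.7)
The plan is to prove the three conclusions in turn. The dominance statements are derived from a common ``minimum support'' principle, and the rectangle statement requires additional ideas.

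I first establish the following structural claim for any skew diagram $C$: the partition $\rows{C}$ is the dominance-minimum of the Schur support of $s_C$. Concretely: (a) $\rows{C}$ appears with positive coefficient in the Schur expansion of $s_C$, and (b) every partition $\nu$ in that support satisfies $\rows{C}\leqdom \nu$. Part (a) is proved by an existence argument for a Littlewood--Richardson filling of $C$ with content $\rows{C}$; such a filling can be built by a careful cell-by-cell assignment that respects the row-weak, column-strict, and lattice conditions. Part (b) is proved directly from the Littlewood--Richardson interpretation: in any LR filling of $C$ with content $\nu$, the partial sum $\nu_1+\cdots+\nu_k$ counts the entries that are at most $k$, and the column-strict condition combined with the lattice condition forces this count to be at least the sum of the $k$ largest row lengths of $C$.

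Granting the claim, the rows dominance is immediate: Schur positivity of $s_A-s_B$ gives $[s_{\rows{B}}]\,s_A \geq [s_{\rows{B}}]\,s_B \geq 1$ (the second inequality by (a) applied to $B$), so $\rows{B}$ lies in the Schur support of $s_A$, and (b) applied to $A$ forces $\rows{A}\leqdom \rows{B}$. The columns dominance then follows by applying the involution $\omega$: since $\omega(s_A-s_B)=s_{A^t}-s_{B^t}$ is Schur positive and $\rows{A^t}=\cols{A}$, the rows conclusion for the pair $A^t, B^t$ translates to $\cols{A}\leqdom \cols{B}$.

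For the rectangle-count statement, the cases $m=1$ and $n=1$ are already implicit in the dominance inequalities just proved, since $\rows{A}\leqdom \rows{B}$ is equivalent (via conjugation and a standard characterization of dominance) to $N_{1,n}(A)\leq N_{1,n}(B)$ for every $n\geq 1$, and symmetrically for columns. For $m,n\geq 2$ the count involves interactions between rows and columns that go beyond either dominance statement, and this is the main obstacle of the plan. My proposal here is to invoke the skewing operators $s^\perp_\mu$, which preserve Schur positivity (since each $s_\nu$ maps to the Schur-positive skew Schur $s_{\nu/\mu}$), so that $s^\perp_{(n^m)}(s_A-s_B)$ is again Schur positive; paired with a suitable linear functional, this should yield the required inequality on $N_{m,n}$. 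Identifying this functional---equivalently, a Schur-positive symmetric function whose pairing with $s_C$ computes the $m\times n$ rectangle count of $C$---is the main technical difficulty, and I expect the final argument to require a bespoke combinatorial identity tailored to rectangular subdiagrams.
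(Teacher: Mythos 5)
The first thing to note is that the paper does not actually prove this lemma: its ``proof'' consists entirely of citations to \cite{McN}, where the rectangle statement is a main theorem and the dominance statements are folklore results reproduced with proof. So you are attempting to prove from scratch what the authors outsource. For the two dominance inequalities your outline is essentially the standard one and is sound: show that $\rows{B}$ lies in the Schur support of $s_B$ (hence of $s_A$) and that $\rows{A}$ is the dominance-minimum of the support of $s_A$. Two caveats, though. Your part (b) is asserted to follow ``directly'' because column-strictness plus latticeness force at least $\lambda_1+\cdots+\lambda_k$ entries $\leq k$; the natural direct argument (every entry in row $i$ of an LR filling is at most $i$) only bounds this count below by the number of cells in the \emph{top} $k$ rows, not the $k$ \emph{largest} rows, so as written the step does not go through. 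The clean route is to first prove the easy upper bound $\nu \leqdom (\cols{C})^t$ (each column contributes at most $\min\{k,c_j\}$ entries $\leq k$) and then apply $\omega$, using that conjugation reverses dominance. Your part (a) (``a careful cell-by-cell assignment'') is a claim of existence, not a construction; it is true but needs an actual filling or a reference.

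The genuine gap is the rectangle-count assertion for $m,n\geq 2$. You explicitly do not have a proof: the plan of applying $s^\perp_{(n^m)}$ and then pairing with ``a suitable linear functional'' leaves the entire content of the statement---producing a Schur-positivity-monotone quantity that equals the number of $m$-by-$n$ rectangular subdiagrams---unidentified, and you acknowledge this. This is not a routine technical step; it is precisely the main result of \cite{McN}, and it does not follow formally from the (correct) observation that skewing operators preserve Schur positivity. As it stands, the third conclusion of the lemma is unproved, so the proposal is incomplete. If you do not intend to reprove \cite{McN}, the honest course is to cite it for this part (and, optionally, for the folklore dominance inequalities as the paper does).
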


\begin{proof}
The latter assertion is one of the main results of \cite{McN}.
Since the first pair of inequalities are well-known folklore results that are difficult to find in the literature, they have recently
been reproduced with proof in \cite{McN}.
\end{proof}

As promised, it is now time to restrict our attention to ribbons.

\begin{lemma}\label{lem:length}
Let $\alpha, \beta$ be compositions. If $\ell(\alpha) \neq \ell(\beta)$ then $r(\alpha)-r(\beta)$ is Schur incomparable.
\end{lemma}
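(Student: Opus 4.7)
The plan is to apply the rectangular subdiagram count from Lemma~\ref{lem:rowscols}. First observe that if $|\alpha|\neq|\beta|$, then Lemma~\ref{lem:size} already gives Schur incomparability, so we may reduce to the case $|\alpha|=|\beta|=N$ and, without loss of generality, $\ell(\alpha)<\ell(\beta)$. Write $A=\psi(\alpha)$ and $B=\psi(\beta)$.

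The key computation is to count $1\times 2$ and $2\times 1$ rectangular subdiagrams in an arbitrary ribbon with $N$ boxes and $\ell$ rows. For the $1\times 2$ count, a row of length $\alpha_i$ contributes $\alpha_i-1$ horizontally adjacent pairs, so the total is $\sum_i(\alpha_i-1)=N-\ell$. For the $2\times 1$ count, by the defining property of a ribbon, each pair of consecutive rows shares exactly one column and therefore contributes exactly one vertically adjacent pair, giving a total of $\ell-1$.

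With $\ell(\alpha)<\ell(\beta)$, the ribbon $A$ then contains strictly more $1\times 2$ subdiagrams than $B$ (since $N-\ell(\alpha)>N-\ell(\beta)$) and strictly fewer $2\times 1$ subdiagrams than $B$ (since $\ell(\alpha)-1<\ell(\beta)-1$). Applying Lemma~\ref{lem:rowscols} in each direction, the first inequality rules out $r(\alpha)-r(\beta)$ being Schur positive, while the second rules out $r(\beta)-r(\alpha)$ being Schur positive. We conclude that $r(\alpha)-r(\beta)$ is Schur incomparable.

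There is no real obstacle here; the only minor point is confirming the closed-form ribbon rectangle counts $N-\ell$ and $\ell-1$, after which the proof is an immediate application of Lemma~\ref{lem:rowscols} in both directions.
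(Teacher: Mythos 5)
Your proof is correct. The counts are right: a ribbon row of length $a$ contributes exactly $a-1$ horizontal dominoes, and connectedness together with the absence of a $2\times 2$ block forces consecutive rows of a ribbon to overlap in exactly one column, giving $\ell-1$ vertical dominoes; applying the rectangle-count part of Lemma~\ref{lem:rowscols} with these two invariants, which move in opposite directions as $\ell$ grows, rules out Schur positivity in both directions. The paper's proof has the identical skeleton (reduce to $|\alpha|=|\beta|$ via Lemma~\ref{lem:size}, assume $\ell(\alpha)<\ell(\beta)$, then kill each direction with a different invariant) but draws on the \emph{first} part of Lemma~\ref{lem:rowscols} instead of the last: it notes that $\rows{\alpha}\not\leqdom\rows{\beta}$ because the partial sums of $\rows{\alpha}$ reach $N$ too early, and then uses the ribbon identity $\#\cols{\alpha}+\ell(\alpha)=|\alpha|+1$ to conclude $\cols{\beta}\not\leqdom\cols{\alpha}$. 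Your domino counts are essentially the coarser numerical shadows of those two dominance failures (the $1\times 2$ count of any skew diagram is its size minus its number of nonempty rows, and dually for columns), so the two arguments are close cousins resting on the same trade-off between rows and columns in a ribbon; yours swaps the dominance bookkeeping for the rectangle-count statement of Lemma~\ref{lem:rowscols}, which is a perfectly legitimate alternative.
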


\begin{proof}
By Lemma~\ref{lem:size}, we can assume that $|\alpha| = |\beta|$.
Suppose, without loss of generality, that $\ell(\alpha) < \ell(\beta)$.  Then we know that 
$\rows{\alpha} \not\leqdom \rows{\beta}$ and thus, by Lemma~\ref{lem:rowscols}, $r(\alpha) - r(\beta)$
is not Schur positive.  

Observe that for a ribbon $\alpha$, $\#\cols{\alpha} + \ell(\alpha) = |\alpha|+1$.  Therefore, 
$\beta$ has fewer columns than $\alpha$.  In particular, 
$\cols{\beta} \not\leqdom \cols{\alpha}$ and thus, again by Lemma~\ref{lem:rowscols}, 
$r(\beta) - r(\alpha)$ is not Schur positive.
\end{proof}

From Lemmas~\ref{lem:size} and \ref{lem:length} it follows that we need only consider differences of the form $r(\alpha)-r(\beta)$ where $|\alpha |=|\beta|=N$ and $\ell(\alpha) = \ell(\beta)=\ell$, as any other difference of ribbon Schur functions will be Schur incomparable. From this point on, we will consider ribbons with the same skew Schur function to be equivalent.  Rather than continually referring to equivalence classes of ribbons, we will simply refer to ribbons with the implicit understanding that 
a ribbon represents its equivalence class. 

\begin{definition}
Let $\RNl$ denote the poset 
whose elements are  $$\{\alpha\ |\ \alpha\vDash N, \ell(\alpha)=\ell\}$$ 
subject to the relation 
$\alpha \geqs \beta$ if and only if $r(\alpha)- r(\beta)$ is Schur positive.
\end{definition}

We next observe that the elements of $\RNl$ all occur ``together'' in $\bigP{N}$.  More precisely, a subposet $Q$ of a poset $P$ 
is said to be \emph{convex} if, for all $a < b < c$ in $P$ with $a, c \in Q$, we have $b \in Q$.
We then have the following result.

\begin{proposition}\label{prop:ribbonsconvex}
$\RNl$ is a convex subposet of $\bigP{N}$.
\end{proposition}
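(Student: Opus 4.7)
We aim to show that if $[A], [C] \in \RNl$ and $B$ is any skew diagram with $|B| = N$ such that $[A] \les [B] \les [C]$ in $\bigP{N}$, then $[B] \in \RNl$. In fact we prove the stronger statement that any representative skew diagram $B$ of this class is itself a ribbon with exactly $\ell$ rows.

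The first step is to squeeze $B$ between $A$ and $C$ using Lemma~\ref{lem:rowscols} applied to the Schur-positive differences $s_C - s_B$ and $s_B - s_A$. This produces the two dominance chains
$$\rows{C} \leqdom \rows{B} \leqdom \rows{A} \quad \text{and} \quad \cols{C} \leqdom \cols{B} \leqdom \cols{A},$$
along with the inequality that the number of $2 \times 2$ rectangular subdiagrams of $B$ is at most the corresponding count for $A$, which is zero because $A$ is a ribbon. A short verification from the paper's definition of dominance order shows that $\mu \leqdom \lambda$ for partitions $\mu, \lambda \vdash N$ forces $\ell(\mu) \geq \ell(\lambda)$: otherwise, taking $i = \ell(\mu)$ in the dominance inequality gives $N = \mu_1 + \cdots + \mu_{\ell(\mu)} \leq \lambda_1 + \cdots + \lambda_{\ell(\mu)} < N$, a contradiction. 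Combined with the ribbon identity $\#\rows{R} + \#\cols{R} = |R|+1$ applied to $A$ and $C$, the squeeze pins down $\#\rows{B} = \ell$ and $\#\cols{B} = N - \ell + 1$.

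At this point $B$ is a skew diagram with $\ell$ rows, $N - \ell + 1$ columns, $N$ boxes, and no $2 \times 2$ subdiagram, and it suffices to show that $B$ is connected. Since $B$ contains no $2 \times 2$ block, every connected component of $B$ is itself a ribbon. The staircase monotonicity of a skew shape $\lambda/\mu$ (both $\lambda$ and $\mu$ weakly decreasing) forces the components to occupy pairwise disjoint column ranges as well as disjoint row ranges, since an upper component necessarily uses strictly larger column indices than any lower one. If $B$ has $m$ components and the $j$th component has $\ell_j$ rows, $k_j$ columns, and $N_j$ boxes, then $\sum \ell_j = \ell$, $\sum k_j = N - \ell + 1$, and $N_j = \ell_j + k_j - 1$. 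Summing the last identity yields
$$N = \sum_{j=1}^{m} N_j = \ell + (N - \ell + 1) - m,$$
forcing $m = 1$. Hence $B$ is connected and is therefore a ribbon with $\ell$ rows.

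The main obstacle we anticipate is this final combinatorial rigidity step: the dominance manipulations are essentially mechanical consequences of Lemma~\ref{lem:rowscols} plus a standard fact about partitions, whereas ruling out disconnected skew diagrams that nevertheless realize the correct row count, column count, and absence of $2 \times 2$ blocks is where the real content of the argument lies.
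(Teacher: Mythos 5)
Your proof is correct and follows essentially the same route as the paper: apply Lemma~\ref{lem:rowscols} to both differences to bound the number of rows and columns of $B$ and to rule out $2\times 2$ subdiagrams, then use the count $\#\rows{R}+\#\cols{R}=|R|+1$ for ribbons versus $|R|+m$ for a disconnected union of $m$ ribbon components to force connectedness. The only difference is one of detail — you pin down the exact row and column counts of $B$ and spell out the dominance-length fact and the component bookkeeping, where the paper settles for the one-sided inequality $\#\cols{B}+\#\rows{B}\leq |B|+1$ and derives the same contradiction.
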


\begin{proof}
Suppose $\alpha, \gamma \in \bigP{N}$ are also elements of $\RNl$, and $B$ is a 
skew diagram satisfying $\alpha \les B \les \gamma$.
Since $\alpha \les B$, the last part of Lemma~\ref{lem:rowscols} with $m=n=2$ tells us
that $B$ contains no 2-by-2 rectangular subdiagram.  Thus if $B$ is connected, we conclude that $B$ 
must be a ribbon.  Therefore, we can suppose $B$ is not connected.
By Lemma~\ref{lem:rowscols}, we have
\[
\rows{\gamma} \leqdom \rows{B}. 
\]
In particular, $B$ must have at most $\ell$ non-empty rows.  We also have that
\[
\cols{\gamma} \leqdom \cols{B}.
\]
Therefore, $B$ must have at most $\#\cols{\gamma}$
non-empty columns.   

Putting this together, 
we deduce that 
\[\#\cols{B} + \#\rows{B} \leq \#\cols{\gamma}+\ell = |\gamma|+1 = |B|+1,\] 
where we do not count empty columns and
rows of $B$.  On the other hand, since $B$ is not connected and has no 2-by-2 subdiagram, we see that we
must have $\#\cols{B} + \#\rows{B} > |B|+1$, a contradiction.
\end{proof}

It is now time to focus our attention on multiplicity-free ribbons.

\begin{definition}
Let $\PNl$ denote the poset whose elements are 
the multiplicity-free ribbons with $N$ boxes and $\ell$ rows,
subject to the relation 
$\alpha \geqs \beta$ if and only if $r(\alpha)- r(\beta)$ is Schur positive.
\end{definition}

In other words, $\PNl$ is the multiplicity-free part of $\RNl$.  
The reader may wish to find the 
various $\multfreeP{4}{\ell}$ in $\bigP{4}$ by referring to Figure~\ref{fig:p4}, and see Figure~\ref{fig:p12_6} for a more substantial example of $\PNl$.
By Lemmas~\ref{lem:size} and ~\ref{lem:length}, $\PNl$ and $\multfreeP{N'}{\ell'}$ are completely
incomparable unless $N=N'$ and $\ell=\ell'$.  Hence, from now on we fix $N$ and $\ell$ and restrict
our attention to $\PNl$. 

\begin{remark}\label{rem:transposeposet}
Observe by \eqref{eq:lrrule} and the definition of $\omega$ that we have $s_{\lambda/\mu} - s_{\sigma/\tau}$ is Schur positive if and only if $\omega(s_{\lambda/\mu}) - \omega(s_{\sigma/\tau})$ is Schur positive. Hence  applying $\omega$ to each element of $\PNl$ yields the poset $\multfreeP{N}{N-\ell+1}$.
\end{remark}

\begin{corollary}\label{cor:pnlconvex}
$\PNl$ is a convex subposet of $\bigP{N}$.
\end{corollary}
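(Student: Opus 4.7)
The plan is to combine the convexity of ribbons (Proposition~\ref{prop:ribbonsconvex}) with the dominance inequalities of Lemma~\ref{lem:rowscols} to pinch any intermediate diagram into the multiplicity-free class.

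Fix $\alpha, \gamma \in \PNl$ and a skew diagram $B$ with $\alpha \les B \les \gamma$. The first step is essentially free: Proposition~\ref{prop:ribbonsconvex} gives immediately that $B$ is a ribbon, which I shall denote by $\beta$. By the characterization recorded in Lemma~\ref{lem:multfreeribbon}, verifying $\beta \in \PNl$ amounts to showing that $\beta$ has at most two rows and at most two columns of length greater than $1$ (the cases $\ell \leq 2$ being automatic, I treat $\ell \geq 3$ below).

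For the row bound, apply Lemma~\ref{lem:rowscols} to $s_\gamma - s_\beta$ to obtain $\rows{\gamma} \leqdom \rows{\beta}$. Since $\gamma$ is a multiplicity-free ribbon with $\ell$ rows summing to $N$, its row partition has at most two parts exceeding $1$, so the partial sum of $\rows{\gamma}$ after two parts equals $N - \ell + 2$. Suppose for contradiction that $\beta$ has $r \geq 3$ rows of length $>1$, and write $\rows{\beta} = (b_1, \ldots, b_r, 1^{\ell-r})$ with $b_r \geq 2$. Then
\[
b_1 + b_2 = (N - \ell + r) - (b_3 + \cdots + b_r) \leq (N - \ell + r) - 2(r - 2) \leq N - \ell + 1,
\]
which is strictly less than $N - \ell + 2$, contradicting $\rows{\gamma} \leqdom \rows{\beta}$ at position $2$. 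Hence $\beta$ has at most two rows of length $>1$.

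For the column bound I would invoke Remark~\ref{rem:transposeposet}: applying $\omega$ term by term transforms $\alpha \les \beta \les \gamma$ into a Schur-positive chain $\omega(\alpha) \les \omega(\beta) \les \omega(\gamma)$ whose endpoints are multiplicity-free ribbons with $N - \ell + 1$ rows, and transposition interchanges rows with columns. The row argument above applied to the transposed chain then shows that $\omega(\beta)$ has at most two rows of length $>1$, i.e., $\beta$ has at most two columns of length $>1$. Together with the bound on rows, Lemma~\ref{lem:multfreeribbon} concludes that $\beta \in \PNl$. The only genuinely non-trivial step is the dominance estimate at position $2$; the row-column duality via $\omega$ and the reduction to ribbons via Proposition~\ref{prop:ribbonsconvex} are essentially formal.
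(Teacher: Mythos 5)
Your proof is correct, but it takes a genuinely different---and considerably more labor-intensive---route than the paper's. After the common first step (Proposition~\ref{prop:ribbonsconvex} forces the intermediate diagram to be a ribbon), the paper simply observes that $\PNl$ is an order ideal of $\RNl$: if $s_\gamma - s_\beta$ is Schur positive and $\gamma$ is multiplicity-free, then each Littlewood--Richardson coefficient of $\beta$ is squeezed between $0$ and the corresponding coefficient of $\gamma$, which is $0$ or $1$, so $\beta$ is multiplicity-free with no combinatorics of shapes required; an order ideal of a convex subposet is convex, and the corollary follows in two lines. You instead pass to the combinatorial characterization of multiplicity-free ribbons (at most two rows and at most two columns of length exceeding one) and verify it for $\beta$ via the dominance inequalities of Lemma~\ref{lem:rowscols}. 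Your key estimate is sound: $\rows{\gamma}$ has second partial sum $N-\ell+2$, while a ribbon with three or more rows of length at least $2$ has second partial sum at most $N-\ell+1$, contradicting $\rows{\gamma}\leqdom\rows{\beta}$; the column bound follows either by your $\omega$ argument or, even more directly, from the inequality $\cols{\gamma}\leqdom\cols{\beta}$ that Lemma~\ref{lem:rowscols} already supplies. Two small points you should make explicit: you are implicitly invoking Lemma~\ref{lem:length} to know that $\beta$ has exactly $\ell$ rows, and the row/column-count form of the multiplicity-free criterion is the paraphrase given in the introduction rather than the literal statement of Lemma~\ref{lem:multfreeribbon} (the equivalence is true but deserves a sentence). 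What your approach buys is a purely diagrammatic argument using only the necessary conditions of Lemma~\ref{lem:rowscols} rather than the full strength of Schur positivity; what it costs is length, and it hides the cleaner structural fact that multiplicity-freeness is inherited downward in the Schur positivity order for arbitrary skew diagrams, not just ribbons.
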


This corollary adds weight to our study of $\PNl$.  Once we show that $\PNl$ has a certain structure, 
the corollary tells us that this structure will not be ``hidden'' in $\bigP{N}$.  On the contrary, there
will be a copy of $\PNl$ appearing as a convex subposet of $\bigP{N}$ for every $\ell = 1, \ldots, N$.  

\begin{proof}[Proof of Corollary~\ref{cor:pnlconvex}]
By Proposition~\ref{prop:ribbonsconvex}, $\RNl$ is a convex subposet of $\bigP{N}$.  As a subposet
of $\RNl$, $\PNl$ must form an order ideal (or ``down-set'') since any element that is
less than a multiplicity-free element must itself be multiplicity-free.  Therefore, $\PNl$ is a
convex subposet of a convex subposet, and thus is a convex subposet of $\bigP{N}$.  
\end{proof}

Now that we have reduced the number of differences we need to consider by restricting to $\PNl$, our next step is to identify the ribbons that index these multiplicity-free ribbon Schur functions.

\begin{lemma}\label{lem:multfreeribbon}
If $\alpha \vdash 0$ then $r(\alpha)$ is multiplicity-free. If $\alpha \vDash N\geq1$ then $r(\alpha)$ is multiplicity-free if and only if $\alpha=(m, 1^k,n,1^l)$ or $\alpha=(m, 1^k,n,1^l)^\ast$ for $n
\geq 1$ and $k,l, m\geq 0$.
\end{lemma}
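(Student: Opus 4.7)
The plan is to derive this characterisation from the classification of multiplicity-free skew Schur functions, due independently to Gutschwager and to Thomas--Yong.  By Proposition~\ref{prop:ribreverse} one has $r(\alpha) = r(\alpha^\ast)$, so multiplicity-freeness is invariant under antipodal rotation; hence it suffices to describe the multiplicity-free ribbons up to this symmetry, which is precisely the rotation already built into the statement.  The $N=0$ case is trivial since $r(\emptyset)=1$.

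The first step is to invoke the Gutschwager/Thomas--Yong theorem, which lists those skew shapes $\lambda/\mu$ with every $c^\lambda_{\mu\nu}\in\{0,1\}$, and to intersect their list of admissible shapes with the class of ribbons (connected skew shapes containing no $2\times 2$ subdiagram).  The rectangle-based families appearing in their classification collapse sharply under the no-$2\times 2$ constraint, and reading off the survivors yields the geometric characterisation already announced in the introduction: a ribbon is multiplicity-free precisely when it has at most two rows of length greater than $1$ and at most two columns of length greater than $1$.

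The second step is to translate this geometric condition into the composition form of the lemma.  For a ribbon with (at most) two rows of length $>1$, write $\alpha = (1^a, m, 1^b, n, 1^c)$ with $m,n \geq 2$, allowing the degenerate cases $m=0$ or $n=1$ to cover ribbons with fewer long rows and single-box ribbons.  A direct drawing of this ribbon shows that its columns of length greater than $1$ are exactly the column containing the right end of the $m$-row (length $a+1$), the column containing the left end of the $m$-row together with the right end of the $n$-row (length $b+2$), and the column containing the left end of the $n$-row (length $c+1$).  Since the middle column is always long, requiring at most two long columns forces $a=0$ or $c=0$, giving $\alpha = (m, 1^b, n, 1^c)$ or its antipodal rotation, which matches the parameter ranges $n\geq 1$ and $m,k,l\geq 0$ in the statement.

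The main obstacle is the first step: the Gutschwager/Thomas--Yong classification is formulated in terms of the partitions $\lambda$ and $\mu$, whereas any given ribbon admits many different presentations as $\lambda/\mu$, and their classification carries its own rotation and transposition symmetries that must be tracked.  Once a convenient presentation of a ribbon as $\lambda/\mu$ is chosen and these symmetries are accounted for, the intersection with the ribbon class is essentially automatic, but carrying out the book-keeping cleanly --- and verifying that the degenerate boundary ribbons really do fit the stated parameter ranges --- is where the work lies.
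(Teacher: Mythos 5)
Your proposal is correct and follows essentially the same route as the paper, which simply cites the Gutschwager and Thomas--Yong classifications of multiplicity-free skew shapes and lets the translation to the composition form $(m,1^k,n,1^l)$ go without comment. The extra detail you supply --- reducing their classification to the ``at most two long rows and at most two long columns'' criterion and then reading off the column lengths $a+1$, $b+2$, $c+1$ to force $a=0$ or $c=0$ --- is a sound elaboration of the bookkeeping the paper leaves implicit.
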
 

\begin{proof}
The first part follows since $r(\emptyset)=1$. The second part follows from  \cite[Theorem 3.5]{Gutschwager} or \cite[Theorem 1]{ThomasYong}.
\end{proof}

The differences we need to consider are further reduced due to Proposition~\ref{prop:ribreverse}.
Therefore, it suffices to restrict to ribbons of the form $\alpha=(m, 1^k,n,1^l)$, with $n \geq 1$
and $k, l, m \geq 0$.  In fact, for our purposes, it is safe to ignore the case when $m=0$.  Indeed, this restriction only eliminates the ribbon $\alpha=(N)$, 
which is the unique ribbon in $\multfreeP{N}{1}$ and so is incomparable to all other ribbons.  Observe that ribbons of the form $(m, 1^k,n,1^l)$, when rotated 45 degrees clockwise, are typically in 
the shape of the letter M.  This is one of the reasons for our notation $\PNl$.
It is natural to consider whether we can reduce the number of differences to consider any further by discovering other equalities between multiplicity-free ribbon Schur functions. However, no others exist by \cite[Theorem 4.1]{HDL}.  In other words, within $\PNl$, the only members of the equivalence
class of a ribbon $\alpha$ are $\alpha$ and $\alpha^\ast$.

Before moving on to study individual order relations in $\PNl$, there is one more observation 
worth making about the structure of $\bigP{N}$.
The next result shows that the posets $\bigP{N}$, $\RNl$ and $\PNl$ themselves break up into convex subposets, with each such convex subposet corresponding to a fixed partition of row lengths.

\begin{proposition}
Given a partition $\lambda$ of $N$ with $\ell(\lambda)=\ell$, the set
\[
\{A \in \bigP{N}\ |\ \rows{A} = \lambda\}
\]
forms a convex subposet of $\bigP{N}$.  Furthermore, the intersection of this set with $\PNl$ 
(resp.\ $\RNl$) forms a convex subposet of $\PNl$ (resp.\ $\RNl$).
\end{proposition}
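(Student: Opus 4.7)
The plan is to use Lemma~\ref{lem:rowscols} directly for the first statement, and then deduce the second statement formally from the fact that intersections of convex subposets remain convex.

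For the first part, suppose $A \les B \les C$ in $\bigP{N}$ with $\rows{A} = \rows{C} = \lambda$. I would apply Lemma~\ref{lem:rowscols} to each of the two relations to obtain
\[
\lambda = \rows{A} \leqdom \rows{B} \leqdom \rows{C} = \lambda.
\]
Since $A, B, C$ all have $N$ boxes, all three of $\rows{A}, \rows{B}, \rows{C}$ are partitions of $N$, and I would invoke antisymmetry of dominance order on partitions of $N$ to conclude $\rows{B} = \lambda$, hence $B \in S_\lambda$. The one small point to verify is antisymmetry when the two partitions may a priori have different lengths: if $\mu, \nu \vdash N$ with $\mu \leqdom \nu \leqdom \mu$, then taking $i$ up to $\min\{\ell(\mu),\ell(\nu)\}$ forces equality of all partial sums, and since both partitions sum to $N$, the tails must be empty, giving $\mu = \nu$.

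For the second part, I would argue as follows. The set $\PNl$ (resp.\ $\RNl$) is already established as a convex subposet of $\bigP{N}$ by Corollary~\ref{cor:pnlconvex} (resp.\ Proposition~\ref{prop:ribbonsconvex}), so the partial order on it is inherited from $\bigP{N}$. Suppose $A \les B \les C$ in $\PNl$ with $A, C \in S_\lambda \cap \PNl$. Then $A \les B \les C$ holds in $\bigP{N}$ with $A, C \in S_\lambda$, and by the first part $B \in S_\lambda$. Since $B \in \PNl$ by hypothesis, we have $B \in S_\lambda \cap \PNl$. The identical argument works for $\RNl$ in place of $\PNl$.

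There is no real obstacle here: Lemma~\ref{lem:rowscols} does essentially all the work, and the only subtlety is checking that dominance is antisymmetric on partitions of a fixed size even without a fixed length. The structural observation that convexity passes to intersections with convex subposets is purely formal once one remembers that $\PNl$ and $\RNl$ inherit their order from $\bigP{N}$.
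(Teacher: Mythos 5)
Your proof is correct and takes essentially the same route as the paper: Lemma~\ref{lem:rowscols} pins $\rows{B}$ between two copies of $\lambda$ in dominance order, and the statements for $\PNl$ and $\RNl$ follow formally (the paper phrases this step as ``the intersection of convex subposets is convex,'' but your direct element-chase is equivalent). The one slip is directional: since the dominance inequality of Lemma~\ref{lem:rowscols} runs opposite to $\leqs$, the chain $A \les B \les C$ yields $\rows{C} \leqdom \rows{B} \leqdom \rows{A}$ rather than the reverse --- harmless here because both endpoints equal $\lambda$, and your explicit check that dominance is antisymmetric on partitions of $N$ is a worthwhile detail the paper leaves implicit.
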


The reader may wish to observe this phenomenon in Figures~\ref{fig:p4}--\ref{fig:p12_6}.
The proposition also holds with $\cols{A}$  in place of $\rows{A}$.

\begin{proof}
Let $Q_\lambda$ denote the set  $\{A \in \bigP{N} \ |\ \rows{A} = \lambda\}$.
If $A \les B \les C$ and $A, C \in Q_\lambda$, then by Lemma~\ref{lem:rowscols}, 
\[
\rows{C} \leqdom \rows{B} \leqdom \rows{A}.
\]
This implies that $\rows{B} = \lambda$ and so $Q_\lambda$ is a convex subposet of $\bigP{N}$.  Furthermore, applying Corollary~\ref{cor:pnlconvex}, the intersection of $Q_\lambda$ with $\PNl$ must 
be convex in $\bigP{N}$ since the intersection of convex subposets is convex.  The convexity of the intersection in $\bigP{N}$ automatically implies its convexity in $\PNl$.  A similar argument 
that uses Proposition~\ref{prop:ribbonsconvex} applies to $\RNl$.  
\end{proof}

In conclusion, not only does each $\PNl$ sit nicely as a convex subposet of $\bigP{N}$,
each $\PNl$ consists entirely of convex subposets of the form 
\[
\{\alpha\ |\  \alpha \mbox{ is multiplicity-free}, \rows{\alpha} = \lambda\}.
\]

\section{Fundamental Relations}\label{sec:fundamental}

As a consequence of the previous section, 
we can focus our attention on differences of the form
$$r(\alpha)-r(\beta)$$
where $|\alpha |=|\beta|$, $\ell(\alpha) = \ell(\beta)$, $\alpha=(m, 1^k,n,1^l)$, $\beta=(m', 1^{k'} ,n',1^{l'})$, $m,n,m',n' \geq 1$ and $k,l,k',l' \geq 0$.  We are now ready to state four pivotal Schur positive differences.  The reader may wish to compare the left-hand side of these 
relations with the edges of Figure~\ref{fig:p12_6}.  The first (resp.\ last) two equalities 
correspond to the edges that run northeast (resp.\ northwest).

\begin{lemma}\label{lem:fourcovers}
\

\begin{enumerate}
\item If $n-1>m$ then 
$$r(n-1, 1^k, m+1, 1^l)-r(m, 1^k, n, 1^l)=\sum _{i=0} ^{\min \{k,l\}}s_{(n-1,m+1,2^i,1^{k+l-2i})}.$$
\item If $n>m$ and $l\geq 1$ then 
$$r(m, 1^k, n, 1^l)-r(n, 1^k, m, 1^l)=\sum _{i=0} ^{\min \{k,l-1\}}s_{(n,m+1,2^i,1^{k+l-2i-1})}.$$
\item If $n\geq 2$ and $l>k$ then 
$$r(m, 1^k, n, 1^l)-r(m, 1^l, n, 1^k)=\sum _{i=0} ^{\min \{n-2,m-1\}}s_{(n+m-i-1,i+2,2^k,1^{l-k-1})}.$$
\item If $m,n\geq 2$ and $l-1>k$ then 
$$r(m, 1^{l-1}, n, 1^{k+1})-r(m, 1^k, n, 1^l)=\sum _{i=0} ^{\min \{n-2,m-2\}}s_{(n+m-i-2,i+2,2^{k+1},1^{l-k-2})}.$$
\end{enumerate}
\end{lemma}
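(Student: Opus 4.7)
The plan is to compute the Schur expansion of each ribbon Schur function $r(m, 1^k, n, 1^l)$ directly from the Littlewood-Richardson rule, and to verify each identity by subtraction. By Lemma~\ref{lem:multfreeribbon} every ribbon appearing is multiplicity-free, so its Schur expansion has the form $r(\alpha) = \sum_{\nu \in S(\alpha)} s_\nu$ for some explicit set $S(\alpha)$ of distinct partitions; each identity then amounts to showing $S(\alpha) \supseteq S(\beta)$ and identifying $S(\alpha) \setminus S(\beta)$ with the claimed list of partitions on the right-hand side.

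To determine $S(m, 1^k, n, 1^l)$, I would walk through the reading word of a putative LR-filling $T$. The lattice condition applied to the top row (read right-to-left) forces every entry there to be a $1$; the subsequent lattice constraints together with column strictness then force the $k$ singleton cells below to carry the strictly increasing sequence $2, 3, \ldots, k+1$; and the rightmost cell of the middle wide row must be $k+2$. A brief case check (using that, before reading each new cell, the count of $i-1$ must strictly exceed the count of $i$ whenever an $i$ is introduced) shows that the remaining $n-1$ cells of the middle wide row can only be filled, weakly increasingly, with $1$s and $2$s, the number $p$ of $2$s being constrained by $0 \leq p \leq \min\{m-1, n-1\}$. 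Finally, the $l$ singleton cells at the bottom carry a strictly increasing sequence $e_1 < e_2 < \cdots < e_l$ with $e_1 \geq 2$, whose admissibility is controlled by a further lattice condition depending on $p$, $m$, $n$, and $k$. Collecting these constraints yields a clean parametrisation of $S(\alpha)$, from which each $\nu \in S(\alpha)$ is read off explicitly in terms of $p$ and the bottom sequence.

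With explicit parametrisations of both $S(\alpha)$ and $S(\beta)$ in hand, each of the four identities becomes a symmetric-difference computation. For (1), under $n-1 > m$, I would check that every admissible parameter pair for $(m, 1^k, n, 1^l)$ is also admissible for $(n-1, 1^k, m+1, 1^l)$ and that the ``extra'' admissible pairs for the latter produce precisely $\{(n-1, m+1, 2^i, 1^{k+l-2i}) : 0 \leq i \leq \min\{k, l\}\}$; the other three identities are handled analogously under their stated hypotheses, with the role of the varied parameter in each case dictating which pairs survive the difference. The main obstacle is the sheer volume of case analysis rather than any novel idea: the four identities involve different asymmetries between the parameters (swapping row lengths, moving a box, or permuting the spacings $k$ and $l$), and in each one must carefully verify both the containment $S(\alpha) \supseteq S(\beta)$ and the explicit description of the difference, paying close attention to the boundary cases controlled by the strict inequalities $n-1>m$, $n>m$, $l>k$, and $l-1>k$ in the respective hypotheses.
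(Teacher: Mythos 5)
Your proposal is correct in substance, and its core---an explicit determination of the LR-fillings of the M-shaped ribbon $(m,1^k,n,1^l)$, with the top row forced to be all $1$'s, the upper singletons forced to be $2,\ldots,k+1$, the wide row carrying $k+2$ together with $1$'s and $2$'s (the number of $2$'s ranging over $0,\ldots,\min\{m-1,n-1\}$), and a constrained increasing sequence down the bottom column---is exactly the analysis the paper carries out. The differences are organizational. First, the paper performs this analysis only for parts (1) and (2), and then obtains (3) and (4) for free by applying the involution $\omega$ (equivalently, transposing all the diagrams), which halves the case work; your plan treats all four identities directly, which succeeds but forces you to redo, for the transposed shapes, precisely the delicate bottom-column admissibility analysis that you currently leave unspecified. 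Second, where you invoke multiplicity-freeness (Lemma~\ref{lem:multfreeribbon}) to reduce each identity to a comparison of sets of partitions, the paper instead exhibits a content-preserving bijection from the fillings of one shape onto an explicitly described subset of the fillings of the other, so that the containment $S(\beta)\subseteq S(\alpha)$ and the identification of the leftover terms are established in a single step rather than as two separate verifications; the two formulations are logically equivalent here, but the bijection version is slightly more self-contained. Either route yields a complete proof once the admissibility condition on the bottom column of singletons is written out explicitly---that condition is where all the remaining work in your plan resides, and it is determinable by the same prefix-counting argument you describe.
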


\begin{proof}
We begin by proving the first part. Let $\mathcal{T}$ be the set of all tableaux contributing towards the positive coefficient of some Schur function in the Schur function expansion of $r(m,1^k,n,1^l)$. 
Then $\mathcal{T}$ is the set of SSYT $T$ with $\sh(T)=(m, 1^k, n, 1^l)$ and $w(T)$ is lattice. 
Note that every $T\in \mathcal{T}$ has the form
$$\begin{matrix}
&&&&&&1&\cdots&1\\
&&&&&&2\\
&&&&&&\vdots\\
&&&&&&k+1\\
1&\cdots&1&2&\cdots&2&k+2\\
2\mbox{ or } 3\\
\vdots\\
p\\
k+3\\
\vdots
\end{matrix}$$
where $1\leq p\leq k+2$ and the number of $2$'s in the row of length $n$ ranges from $0$ to $m-1$. Now  let $\mathcal{U}$ be the set of all tableaux contributing towards the positive coefficient of some Schur function in the Schur function expansion of $r(n-1,1^k,m+1,1^l)$. Then $\mathcal{U}$ is the set of SSYT $U$ with $\sh(U)=(n-1,1^k,m+1,1^l)$ and $w(U)$ is lattice. Let $\mathcal{U}_1$
consist of those elements of $\mathcal{U}$ of
the form
$$\begin{matrix}
&&&&&&1&\cdots&1\\
&&&&&&2\\
&&&&&&\vdots\\
&&&&&&k+1\\
1&\cdots&1&2&\cdots&2&k+2\\
2\mbox{ or } 3\\
\vdots\\
p\\
k+3\\
\vdots
\end{matrix}$$
where $1\leq p\leq k+2$ and the number of $2$'s 
in the row of length $m+1$ ranges from $0$ to $m-1$. In particular, there is at least one 1 in the
row of length $m+1$.  Let $\mathcal{U}_2$  consist of those elements of $\mathcal{U}$ of
the form
$$\begin{matrix}
&&&1&\cdots&1\\
&&&2\\
&&&\vdots\\
&&&k+1\\
2&\cdots&2&k+2\\
3\\
\vdots\\
p\\
k+3\\
\vdots
\end{matrix}$$
where $2\leq p\leq k+2$. Since $n-1 > m$, these are LR-fillings.  We see that  $\mathcal{U}$ is the disjoint union of $\mathcal{U}_1$ and $\mathcal{U}_2$.
Observe there exists a natural bijection $\phi:\mathcal{T}\rightarrow \mathcal{U}_1$ given by $\phi(T)=U$ if and only if $c(T)=c(U)$ for $T\in \mathcal{T}$ and $U\in \mathcal{U}$. 
Intuitively, $U$ is obtained from $T$ by moving $n-m-1$ copies of $1$ from the row of length $n$ of $T$ to its top row.
From this and \eqref{eq:lrrule} it follows that
\begin{eqnarray*}r(n-1, 1^k, m+1, 1^l)-r(m, 1^k, n, 1^l)&=&\sum _{U\in \mathcal{U}_2} s_{c(U)}\\
&=&\sum _{i=0} ^{\min \{k,l\}}s_{(n-1,m+1,2^i,1^{k+l-2i})}.\end{eqnarray*}


We now prove the second part similarly. Let $\mathcal{T}$ be the set of all tableaux contributing towards the positive coefficient of some Schur function in the Schur function expansion of $r(n,1^k,m,1^l)$. 
Then $\mathcal{T}$ is the set of SSYT $T$  with $\sh(T)=(n, 1^k, m, 1^l)$ and $w(T)$ is lattice.
We can partition $\mathcal{T}$ into two disjoint sets $\mathcal{T}_1$ and $\mathcal{T}_2$ as follows.

Let $\mathcal{T}_1$ consist of those elements of $\mathcal{T}$ of the form
$$\begin{matrix}
&&&&&&1&\cdots&1\\
&&&&&&2\\
&&&&&&\vdots\\
&&&&&&k+1\\
1&\cdots&1&2&\cdots&2&k+2\\
2 \mbox{ or } 3\\
\vdots\\
p\\
k+3\\
\vdots
\end{matrix}$$where $1\leq p\leq k+2$ and the number of $2$'s
in the row of length $m$ ranges from $0$ to $m-2$.
Then $\mathcal{T}_2$ must consist of those elements of $\mathcal{T}$ of the form
$$\begin{matrix}
&&&1&\cdots&1\\
&&&2\\
&&&\vdots\\
&&&k+1\\
2&\cdots&2&k+2\\
3 \\
\vdots\\
p\\
k+3\\
\vdots
\end{matrix}$$
where $2\leq p\leq k+2$. 
Now let $\mathcal{U}$ be the set of all tableaux contributing towards the positive coefficient of some Schur function in the Schur function expansion of $r(m,1^k,n,1^l)$. Then $\mathcal{U}$ is the set of SSYT with $\sh(U)=(m,1^k,n,1^l)$ and $w(U)$ is lattice. We can partition $\mathcal{U}$ into 
three disjoint sets $\mathcal{U}_1$, $\mathcal{U}_2$ and $\mathcal{U}_3$ as follows.  Let $\mathcal{U}_1$ consist of those elements of
$\mathcal{U}$ the form
$$\begin{matrix}
&&&&&&1&\cdots&1\\
&&&&&&2\\
&&&&&&\vdots\\
&&&&&&k+1\\
1&\cdots&1&2&\cdots&2&k+2\\
2 \mbox{ or } 3\\
\vdots\\
p\\
k+3\\
\vdots
\end{matrix}$$where $1\leq p\leq k+2$ and the number of $2$'s 
in the row of length $n$ ranges from $0$ to $m-2$.
Let $\mathcal{U}_2$ consist of those elements of $\mathcal{U}$ of the form
$$\begin{matrix}
&&&&&&1&\cdots&1\\
&&&&&&2\\
&&&&&&\vdots\\
&&&&&&k+1\\
1&\cdots&1&2&\cdots&2&k+2\\
3 \\
\vdots\\
p\\
k+3\\
\vdots
\end{matrix}$$where $1\leq p\leq k+2$ with $p\neq2$, and the number of $2$'s in the row of length $n$ is $m-1$.  Then $\mathcal{U}_3$ must consist of those elements of $\mathcal{U}$  of the form
$$\begin{matrix}
&&&&&&1&\cdots&1\\
&&&&&&2\\
&&&&&&\vdots\\
&&&&&&k+1\\
1&\cdots&1&2&\cdots&2&k+2\\
2 \\
\vdots\\
p\\
k+3\\
\vdots
\end{matrix}$$where $2\leq p\leq k+2$ and  the number of $2$'s
in the row of length $n$ is $m-1$. 
Let $\phi$ be the map that moves $n-m$ copies of 1 from the top row of an element of $\mathcal{T}$
to the row of length $m$.  Observe that $\phi$ is a bijection from $\mathcal{T}_1$ to 
$\mathcal{U}_1$ and from $\mathcal{T}_2$ to $\mathcal{U}_2$.  
From this and \eqref{eq:lrrule} it follows that
\begin{eqnarray*}r(m, 1^k, n, 1^l)-r(n, 1^k, m, 1^l)&=&\sum _{U\in \mathcal{U}_3} s_{c(U)}\\
&=&\sum _{i=0} ^{\min \{k,l-1\}}s_{(n,m+1,2^i,1^{k+l-2i-1})}.\end{eqnarray*}

The third and fourth parts follow by applying the map $\omega$ to the first and second parts, respectively.
\end{proof}

It will turn out that Lemma~\ref{lem:fourcovers} explains all the edges of $\PNl$, and hence 
all the order relations.  We now give a partner lemma that will ultimately show that there are no other order relations in $\PNl$.  Again, the reader may wish to compare these relations with those in 
Figure~\ref{fig:p12_6}.

\begin{lemma}\label{lem:onlycovers} Suppose we have non-negative integers $k,l,k',l'\geq 0$ and $m,n,m',n'\geq 1$ with
the properties that $k+l=k'+l'$ and $m+n=m'+n'$.
\begin{enumerate}
\item If $n-1>m$ then 
$$(n-1, 1^k, m+1, 1^l) \not\leqs (m, 1^{k'}, n, 1^{l'}).$$
\item If $n>m$ and $l \geq 1$ then 
$$(m, 1^k, n, 1^l) \not\leqs (n, 1^{k'}, m, 1^{l'}).$$
\item If $n, n' \geq 2$ and $l>k$ then 
$$(m, 1^k, n, 1^l) \not\leqs (m', 1^l, n', 1^k).$$
\item If $m,n, n' \geq 2$ and $l-1>k$ then 
$$(m, 1^{l-1}, n, 1^{k+1}) \not\leqs (m', 1^k, n', 1^l).$$
\end{enumerate}
\end{lemma}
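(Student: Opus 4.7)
The plan is to handle parts (1) and (3) via the dominance statistics in Lemma~\ref{lem:rowscols}, and parts (2) and (4) by exhibiting an explicit Schur function $s_\nu$ that appears in the expansion of the LHS ribbon but not in that of the RHS; Lemma~\ref{lem:fourcovers} itself supplies the witness in each case.

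For (1), the partitions $\rows{(n-1, 1^k, m+1, 1^l)} = (n-1, m+1, 1^{k+l})$ (using $n-1 \geq m+1$) and $\rows{(m, 1^{k'}, n, 1^{l'})} = (n, m, 1^{k+l})$ differ in their first part, with $n > n-1$, so the dominance inequality required by Lemma~\ref{lem:rowscols} fails. For (3), I first observe that a ribbon $(a, 1^j, b, 1^i)$ has column-length multiset $\{j+2,\, i+1\} \cup \{1^{a+b-3}\}$, so the largest column of $(m, 1^k, n, 1^l)$ is $\max(k+2, l+1)$ while the largest column of $(m', 1^l, n', 1^k)$ is $l+2$; since $l > k$ forces $l+2 > \max(l+1, k+2)$, column dominance again fails.

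For (2), set $\nu_0 := (n, m+1, 1^{k+l-1})$. This is the $i = 0$ summand in Lemma~\ref{lem:fourcovers}(2) applied to the LHS comparison (legitimate because $l \geq 1$), so $s_{\nu_0}$ appears with coefficient $1$ in $r(m, 1^k, n, 1^l)$. When $l' \geq 1$, the same Lemma~\ref{lem:fourcovers}(2) applied to $r(m, 1^{k'}, n, 1^{l'}) - r(n, 1^{k'}, m, 1^{l'})$ again contains $s_{\nu_0}$ as its $i = 0$ term (using $k' + l' = k + l$), so by multiplicity-freeness $s_{\nu_0}$ has coefficient $0$ in $r(n, 1^{k'}, m, 1^{l'})$. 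When $l' = 0$ one has $k' = k+l$, and the RHS ribbon then has a column of length $k'+2 = k+l+2$, strictly exceeding $\max(k+2, l+1)$ (since $l \geq 1$); column dominance from Lemma~\ref{lem:rowscols} rules out Schur positivity directly.

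For (4), the parallel witness is $\mu_0 := (n+m-2,\, 2^{k+2},\, 1^{l-k-2})$, the $i = 0$ summand of Lemma~\ref{lem:fourcovers}(4) applied to the LHS (valid because $m, n \geq 2$ and $l-1 > k$), so $s_{\mu_0}$ appears in $r(m, 1^{l-1}, n, 1^{k+1})$. Applying Lemma~\ref{lem:fourcovers}(4) instead to $r(m', 1^{l-1}, n', 1^{k+1}) - r(m', 1^k, n', 1^l)$ yields the same $\mu_0$ (since $m+n = m'+n'$), forcing coefficient $0$ in the RHS. Alternatively, parts (3) and (4) follow from (1) and (2) by applying the involution $\omega$ and invoking Remark~\ref{rem:transposeposet}. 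The main obstacle is identifying the witnesses $\nu_0$ and $\mu_0$; the key observation is that the summands produced by Lemma~\ref{lem:fourcovers} are precisely the Schur functions that distinguish ``neighboring'' multiplicity-free ribbons, so they are ready-made witnesses once one notes that $k+l$ and $m+n$ behave as invariants on both sides of the claimed non-comparability.
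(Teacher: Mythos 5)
Your part (1) is the paper's argument verbatim, but for (2) and (3) you take genuinely different routes that both work. The paper proves (2) by direct tableau analysis: it writes down an explicit LR-filling of $(m,1^k,n,1^l)$ of content $\nu=(n,m+1,1^{k+l-1})$ and then shows, by a lattice-word and column-count argument, that $(n,1^{k'},m,1^{l'})$ admits no LR-filling of content $\nu$. You instead extract the same witness from Lemma~\ref{lem:fourcovers}(2) and use multiplicity-freeness of $r(m,1^{k'},n,1^{l'})$ to force the coefficient of $s_\nu$ in $r(n,1^{k'},m,1^{l'})$ to be zero; this is clean and avoids any new tableau work, at the price of the extra case $l'=0$, which you correctly dispatch with column dominance. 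For (3) the paper simply applies $\omega$ to (1), whereas your self-contained column-dominance computation (the column multiset $\{k+2,\,l+1,\,1^{m+n-3}\}$ is correct, and $l>k$ does give $l+2>\max(k+2,l+1)$) is equally valid.

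The one genuine gap is in your primary argument for (4). Applying Lemma~\ref{lem:fourcovers}(4) to $r(m',1^{l-1},n',1^{k+1})-r(m',1^k,n',1^l)$ requires $m'\geq 2$, but the hypotheses of part (4) only guarantee $m'\geq 1$ (only $m$, $n$, $n'$ are assumed to be at least $2$), so when $m'=1$ your witness argument does not apply as stated. It can be patched: for $m'=1$ the largest part of $\rows{(m',1^k,n',1^l)}$ is $n'=m+n-1>\max(m,n)$, the largest part of $\rows{(m,1^{l-1},n,1^{k+1})}$, so Lemma~\ref{lem:rowscols} already rules out the comparison. Your stated alternative --- deducing (3) and (4) from (1) and (2) via $\omega$ --- is exactly the paper's proof and covers all cases, so the lemma is established; but as written, the ``primary'' proof of (4) is incomplete without the $m'=1$ case.
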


\begin{proof}
We first prove (1).  We have that 
\begin{eqnarray*}
\rows{(n-1, 1^k, m+1, 1^l)} & = & (n-1, m+1, 1^{k+l}) \\
& \ledom & (n, m, 1^{k'+l'})  \\
& = & \rows{(m, 1^{k'}, n, 1^{l'})}.
\end{eqnarray*}
The result now follows from Lemma~\ref{lem:rowscols}.

Applying the map $\omega$ to (1) gives (3).

We next prove (2).  Consider the partition $\nu = (n, m+1, 1^{k+l-1})$.  One can check that
the ribbon $(m, 1^k, n, 1^l)$ has an LR-filling of content $\nu$.  Indeed, the filling
$$\begin{matrix}
&&&&&&1&\cdots&1\\
&&&&&&2\\
&&&&&&\vdots\\
&&&&&&k+1\\
1&\cdots&1&2&\cdots&2&k+2\\
2 \\
k+3\\
k+4\\
\vdots
\end{matrix}$$ where there are $m-1$ copies of $2$ in the row of length $n$, has the required property.
On the other hand, the ribbon $(n, 1^{k'}, m, 1^{l'})$ has no LR-filling $T$ of content $\nu$.  Indeed, so
that $w(T)$ is lattice, the top row of $T$ contains only $1$'s, leaving $m$ columns to be filled.
$T$ must contain $m+1$ copies of $2$, which
is impossible since we can only put at most one copy of $2$ in each column.
We conclude that $s_\nu$ has positive coefficient in the Schur expansion of $r(m, 1^k, n, 1^l)$ but coefficient 0 in the Schur expansion of $r(n, 1^{k'}, m, 1^{l'})$, yielding the result.

Applying the map $\omega$ to (2) gives (4).
\end{proof}

\section{Poset of rectangles}\label{sec:main} 

We are now in a position to completely characterize when the difference of two multiplicity-free ribbon Schur functions is Schur positive. However, before we do this we will introduce a new notation for ribbon Schur functions that will support the clarity of  the statement of our theorem more than our current notation, which supported the clarity of our proofs.  \new{In effect, our new notation will help explain why $\PNl$ resembles a product of two chains.}

Observe that if $\alpha =(m, 1^k, n, 1^l)$, $m,n\geq 1$, $k,l\geq 0$ and 
$\alpha = \lambda/\mu$, then the natural choice for $\mu$ is $(n-1)^{k+1}$.  Note that we safely ignore the trivial case when $m=n=1$.  Therefore, we can index multiplicity-free ribbons according to the dimensions of the rectangle $\mu$.  More precisely, for a fixed $N$ and $\ell$ we denote the multiplicity-free ribbon Schur functions appearing in the poset $\PNl$ by
$$r[a,b]:=r(N-\ell-b+1,1^{a-1}, b+1, 1^{\ell-a-1})$$for $1\leq a\leq \ell-1$ and $1\leq b\leq N-\ell$, as in
Figure~\ref{fig:indexing}.  
\begin{figure}[htbp]
\begin{center}
\[
\scalebox{.7}{\includegraphics{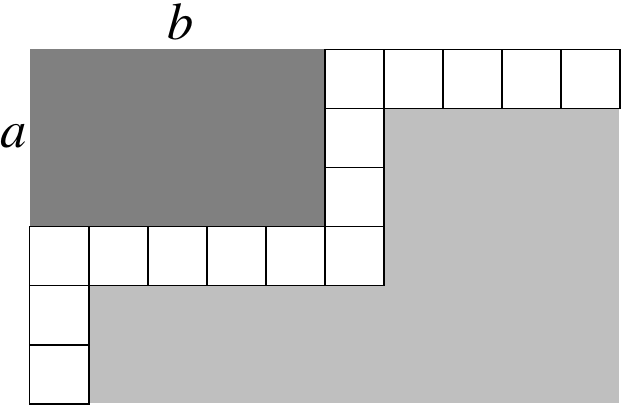}}
\]
\caption{The ribbon in $\multfreeP{15}{6}$ denoted $[3,5]$.}
\label{fig:indexing}
\end{center}
\end{figure}
Some equivalence classes of multiplicity-free ribbons can be indexed by a rectangle in more than one way, and we will need to set a convention.  In particular, if
$a=\ell-1$ then by Proposition~\ref{prop:ribreverse} we have
\begin{eqnarray*}
r[\ell-1, b]&=&r(N-\ell-b+1,1^{\ell-2},b+1)\\
&=&r(b+1,1^{\ell-2},N-\ell-b+1)\\
&=&r[\ell-1, N-\ell-b]
\end{eqnarray*}
and we will choose to use the notation $r[\ell-1,\min\{b,N-\ell-b\}]$.
{Also, if} $b=N-\ell$ then by Proposition~\ref{prop:ribreverse} we have
\begin{eqnarray*}
r[a, N-\ell]&=&r(1^a, N-\ell+1,1^{\ell-a-1})\\
&=&r(1^{\ell-a-1}, N-\ell+1,1^a)\\
&=&r[\ell-a-1, N-\ell]
\end{eqnarray*}
and we will choose to use the notation $r[\min\{a,\ell-a-1\},N-\ell]$.
Note that because our labelling convention requires $a, b \geq 1$,
we also have
$$r(N-\ell+1,1^{\ell-1})=r(1^{\ell-1}, N-\ell+1)=r[\ell-1, N-\ell].$$
In each case, we will let $[a,b]$ denote the equivalence class of ribbons with ribbon Schur function $r[a,b]$.

In summary, we have the following result.

\begin{proposition}\label{prop:rectanglelabel}
The elements of $\PNl$ are those $[a,b]$ such that 
\begin{itemize}
\item $1\leq a < \ell -1$ and $1\leq b< N-\ell$, or
\item $a =\ell -1$ and $1\leq b \leq  \left\lfloor\frac{N-\ell}{2}\right\rfloor$, or
\item $1 \leq a \leq \left\lfloor\frac{\ell-1}{2}\right\rfloor$ and $b = N-\ell$, or
\item $a=\ell-1$ and $b=N-\ell$.
\end{itemize}
\end{proposition}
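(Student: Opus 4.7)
The proposition is essentially a bookkeeping statement that formalizes the indexing scheme introduced in the paragraphs immediately preceding it. My plan is to combine Lemma~\ref{lem:multfreeribbon}, Proposition~\ref{prop:ribreverse}, and the Schur-equivalence fact from \cite[Theorem 4.1]{HDL} cited in the discussion, then finish with a short case analysis to pin down unique representatives under antipodal symmetry.

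First I would show that every class in $\PNl$ carries at least one $[a,b]$ label with $1 \leq a \leq \ell - 1$ and $1 \leq b \leq N - \ell$. By Lemma~\ref{lem:multfreeribbon} and Proposition~\ref{prop:ribreverse}, every such equivalence class has a representative of the form $\alpha = (m, 1^k, n, 1^l)$ with $m \geq 1$, $n \geq 1$, $k, l \geq 0$. Matching box and row counts forces $k + l + 2 = \ell$ and $m + n = N - \ell + 2$. Setting $a := k+1$ and $b := n-1$ then yields $\alpha = r[a,b]$ with $(a,b)$ in range whenever $n \geq 2$. The remaining case $n = 1$ forces $\alpha$ to be the hook $(N - \ell + 1, 1^{\ell - 1})$, which was identified with $r[\ell - 1, N - \ell]$ in the paragraph preceding the proposition.

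Next I would classify when two in-range labels coincide. By \cite[Theorem 4.1]{HDL}, the only multiplicity-free ribbon equivalences are the antipodal rotations of Proposition~\ref{prop:ribreverse}, so it suffices to understand when $(1^{\ell-a-1}, b+1, 1^{a-1}, m)$, the antipodal rotation of $r[a,b]$, can be rewritten in the standard form $(m', 1^{a'-1}, b'+1, 1^{\ell-a'-1})$. A part-by-part comparison of the two compositions shows that a nontrivial rewriting is possible only if the rotation begins with a value $\geq 2$ (forcing $\ell - a - 1 = 0$, i.e., $a = \ell - 1$) or ends with a value $\geq 2$ (forcing $m = 1$, i.e., $b = N - \ell$). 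The two resulting equivalences are precisely $[\ell - 1, b] = [\ell - 1, N - \ell - b]$ and $[a, N - \ell] = [\ell - a - 1, N - \ell]$, which meet at the antipodally self-equivalent hook $[\ell-1, N-\ell]$.

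Finally, selecting canonical representatives immediately yields the four cases: in the interior $1 \leq a < \ell - 1$, $1 \leq b < N - \ell$, no symmetry acts, giving the first case; the boundary $a = \ell - 1$ has the involution $b \leftrightarrow N - \ell - b$, resolved by $b \leq \lfloor (N - \ell)/2 \rfloor$, giving the second case; the boundary $b = N - \ell$ has the involution $a \leftrightarrow \ell - a - 1$, resolved by $a \leq \lfloor (\ell - 1)/2 \rfloor$, giving the third case; and the corner $[\ell - 1, N - \ell]$ stands alone as the hook. The main technical step is the case analysis in the second paragraph: one must verify that when $m \geq 2$ and $l \geq 1$ simultaneously, the antipodally rotated composition genuinely fails to take standard form, so no cross-equivalences between the interior and the boundaries slip through; equivalently, that the only symmetries are the two boundary involutions above.
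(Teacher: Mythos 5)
Your proposal is correct and follows essentially the same route as the paper, which states this proposition as a summary of the immediately preceding discussion (existence of a representative $(m,1^k,n,1^l)$ via Lemma~\ref{lem:multfreeribbon}, the two boundary identifications via Proposition~\ref{prop:ribreverse}, and the absence of further coincidences via \cite[Theorem 4.1]{HDL}). One parenthetical is garbled --- the rotation $(1^{\ell-a-1},b+1,1^{a-1},m)$ ends with a value \emph{equal to} $1$ precisely when $m=1$, not ``$\geq 2$'' --- but the condition you extract ($b=N-\ell$) and the decisive check you identify (that for $m\geq 2$ and $\ell-a-1\geq 1$ the rotation begins with $1$ yet has two parts $\geq 2$, whereas any standard form beginning with $1$ has only one) are both right.
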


Figure~\ref{fig:rects} shows the poset $\multfreeP{12}{6}$ from Figure~\ref{fig:p12_6} now with
the elements labelled by their corresponding rectangles.  

\begin{figure}[htbp]
\begin{center}
\[
\scalebox{.5}{\includegraphics{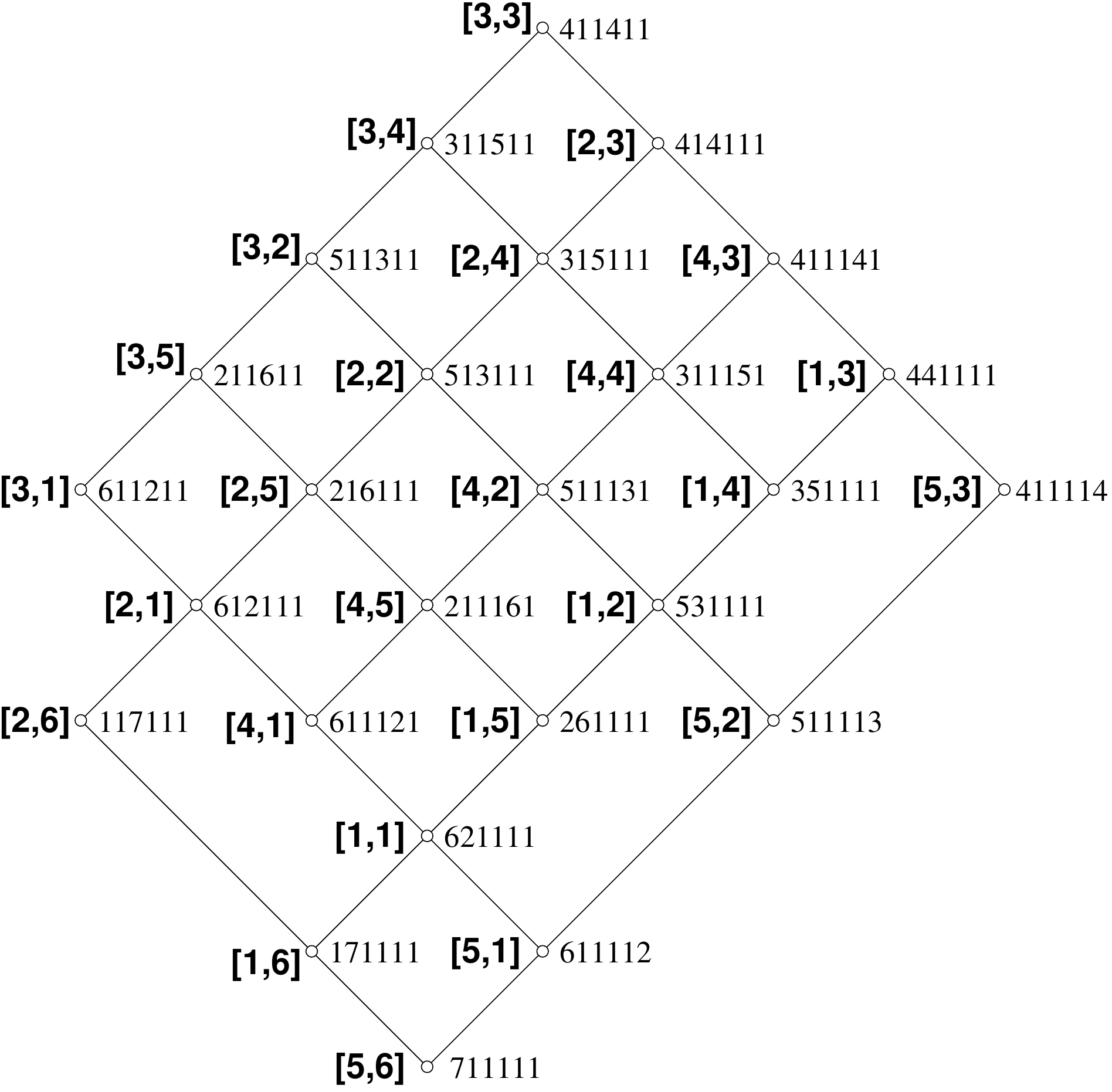}}
\]
\caption{$\multfreeP{12}{6}$ with labels of the form $[a,b]$.}
\label{fig:rects}
\end{center}
\end{figure}
We now define two total orders $<_h$ and $<_w$. Let $<_h$ be the total order on $1, \ldots, \ell-1$ such that
$$\ell-1<_h 1<_h \ell-2 <_h 2 <_h \cdots <_h \left\lfloor \frac{\ell}{2} \right\rfloor.$$
In other words, pick $\varepsilon$ with $0 < \varepsilon < \frac{1}{2}$.  
Then $a <_h b$ if and only if $a$ is further than $b$ from $\frac{\ell}{2} - \varepsilon$ in absolute value.
If $1\leq a,b\leq \ell-1$ then we denote the meet and join of $a$ and $b$ with respect to $<_h$ by $a\wedge _h b$ and $a\vee _h b$, respectively. Let $<_w$ be the total order on $1, \ldots, N-\ell$ such that
$$N-\ell<_w 1<_w N-\ell-1 <_w 2 <_w \cdots  <_w\left\lfloor\frac{N-\ell+1}{2} \right\rfloor.$$
In other words,  $a <_w b$ if and only if $a$ is further than $b$ from $\frac{N-\ell+1}{2} - \varepsilon$ in absolute value.
If $1\leq a,b\leq N-\ell$ then we denote the meet and join of $a$ and $b$ with respect to $<_w$ by $a\wedge _w b$ and $a\vee _w b$, respectively.

Now we are ready for our main theorem, which gives a complete description of the Schur positivity order in $\PNl$.

\begin{theorem}\label{th:bigdiff}
Consider the poset $\PNl$. Then
$$ [a_1, b_1]\leqs [a_2, b_2] \mbox{ if and only if } a_1\leq _h a_2 \mbox{ and }  b_1 \leq _w b_2.$$
\end{theorem}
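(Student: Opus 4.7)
The plan is to prove the biconditional in two directions.

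\textbf{Sufficiency ($\Leftarrow$).} Assume $a_1 \leq_h a_2$ and $b_1 \leq_w b_2$. Because $\leq_h$ and $\leq_w$ are total orders, we can fix a saturated chain $b_1 = c_0 \leq_w c_1 \leq_w \cdots \leq_w c_r = b_2$ and walk from $[a_1, b_1]$ through $[a_1, c_1], \ldots, [a_1, b_2]$ by stepping through $<_w$-covers with $a$ fixed at $a_1$.  Each such cover $[a_1, c_i] \leqs [a_1, c_{i+1}]$ is realised by Lemma~\ref{lem:fourcovers}(1) or (2), according to whether $c_i + c_{i+1} = N - \ell + 1$ (part~(1)) or $c_i + c_{i+1} = N - \ell$ (part~(2)); from the alternating definition of $<_w$ these are the only two possibilities for consecutive pairs.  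Walking analogously from $[a_1, b_2]$ to $[a_2, b_2]$ along $<_h$-covers (fixing $b = b_2$) uses parts~(3) and (4) of Lemma~\ref{lem:fourcovers} for $a$-pairs summing to $\ell$ or $\ell - 1$ respectively.  Transitivity of Schur positivity yields $[a_1, b_1] \leqs [a_2, b_2]$.

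\textbf{Necessity ($\Rightarrow$).} Assume $[a_1, b_1] \leqs [a_2, b_2]$.  Applying Lemma~\ref{lem:rowscols} to rows and columns, and noting that each of $\rows{r[a,b]}$ and $\cols{r[a,b]}$ is a partition whose non-unit parts consist of two entries summing to a constant (namely $N - \ell + 2$ for rows and $\ell + 1$ for columns), the dominance conditions collapse to inequalities of the largest parts, which give
\[
|b_1 - (N-\ell)/2| \;\geq\; |b_2 - (N-\ell)/2|
\quad\text{and}\quad
|a_1 - (\ell-1)/2| \;\geq\; |a_2 - (\ell-1)/2|.
\]
Because $<_h$ and $<_w$ are themselves distance-from-centre orders (with a $\tfrac{1}{2}-\varepsilon$ shift breaking ties), these inequalities imply $b_1 \leq_w b_2$ and $a_1 \leq_h a_2$ except in the tie cases $b_1 + b_2 = N - \ell$ with $b_1 > b_2$, and $a_1 + a_2 = \ell - 1$ with $a_1 > a_2$.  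Each of these tie cases is precisely what parts~(2) and (4) of Lemma~\ref{lem:onlycovers} forbid; their side conditions ($a_1 \neq \ell - 1$ for part~(2), and $b_1 \neq N - \ell$ for part~(4)) are automatic, since otherwise the canonical labelling convention of Proposition~\ref{prop:rectanglelabel} would force $a_1$ (respectively $b_1$) onto the near side of the centre, contradicting $a_1 > a_2$ (respectively $b_1 > b_2$).

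\textbf{Main obstacle.} The principal technical subtlety lies in the boundary rows/columns $a = \ell - 1$ and $b = N - \ell$, where the equivalences $[\ell-1, b] = [\ell-1, N-\ell-b]$ and $[a, N-\ell] = [\ell-a-1, N-\ell]$ collapse consecutive pairs of $<_w$- or $<_h$-positions into a single element of $\PNl$, and the preconditions of parts~(2) or (4) of Lemma~\ref{lem:fourcovers} (requiring $l \geq 1$ or $m \geq 2$) may fail for the canonical representative.  In each such boundary situation, passing to the non-canonical representative via Proposition~\ref{prop:ribreverse} makes the unconditional move~(1) or (3) applicable in place of (2) or (4), supplying the required Schur-positive cover and completing the chain argument of the sufficiency direction.
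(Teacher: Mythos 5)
Your proof is correct, and the sufficiency direction is essentially the paper's: walk a saturated $<_w$-chain and then a $<_h$-chain, realising each cover by the appropriate part of Lemma~\ref{lem:fourcovers} (your observation that consecutive $<_w$-covers alternate between pairs summing to $N-\ell+1$ and to $N-\ell$, matching parts (1) and (2), is exactly the right bookkeeping, as is the remark that the boundary rows $a=\ell-1$ and $b=N-\ell$ are handled by passing to the reversed representative via Proposition~\ref{prop:ribreverse}). The necessity direction, however, is genuinely different. The paper assumes $a_1 >_h a_2$, inserts the element $\widehat{a_2}$ with $\widehat{a_2}\lessdot_h a_1$, uses the already-proved sufficiency to get $[a_1,b_1]\leqs[\widehat{a_2},b_2]$, and then contradicts all four parts of Lemma~\ref{lem:onlycovers}. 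You instead apply Lemma~\ref{lem:rowscols} directly: since $\rows{r[a,b]}$ and $\cols{r[a,b]}$ each have exactly two non-unit parts with constant sum, dominance collapses to a single ``distance from centre'' inequality, which already forces $a_1\leq_h a_2$ and $b_1\leq_w b_2$ except in the exact-tie cases $a_1+a_2=\ell-1$ and $b_1+b_2=N-\ell$; only there do you need parts (2) and (4) of Lemma~\ref{lem:onlycovers}. This is a legitimate and in some ways more illuminating route: it makes transparent why the $\varepsilon$-shifted centres appear in the definitions of $<_h$ and $<_w$, and it shows that parts (1) and (3) of Lemma~\ref{lem:onlycovers} are subsumed by the general dominance argument (indeed the paper proves them by exactly that argument). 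The paper's route is shorter because it never needs to unpack what dominance says about these particular partitions. One small slip: in your tie-case discussion the ``respectively'' pairing is crossed --- for part (2) the failing side condition $a_1=\ell-1$ forces $b_1\leq\lfloor(N-\ell)/2\rfloor$ by the labelling convention, contradicting $b_1>b_2$, while for part (4) the condition $b_1=N-\ell$ forces $a_1\leq\lfloor(\ell-1)/2\rfloor$, contradicting $a_1>a_2$. The mechanism you invoke is the right one; only the indices in the sentence are transposed.
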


\begin{proof}
If one interprets this theorem as well as Lemmas~\ref{lem:fourcovers} and \ref{lem:onlycovers}
in the context of a particular $\PNl$ such as $\multfreeP{12}{6}$ in Figure~\ref{fig:rects}, it 
becomes apparent that Theorem~\ref{th:bigdiff} follows directly from the two lemmas.
Even so, it is worthwhile to detail the connection from the lemmas to this theorem.

Parts (1) and (2) of Lemma~\ref{lem:fourcovers} imply that 
$[a, b_1] \les [a, b_2]$ if $b_1 \lessdot_w b_2$. 
Similarly, (3) and (4) of Lemma~\ref{lem:fourcovers} imply that 
$[a_1, b] \les [a_2, b]$ if $a_1 \lessdot_h a_2$.  
Therefore, $[a_1, b_1]\leqs [a_2, b_2]$ if $a_1\leq _h a_2$ and  $b_1 \leq _w b_2$.

To prove the converse, suppose that while $[a_1, b_1]\leqs [a_2, b_2]$, it is not the case that both $a_1\leq _h a_2$ and  $b_1 \leq _w b_2$.  Suppose first that $a_1 \not\leq_h a_2$.  Since
$\leq_h$ is a total order, we must have $a_1 >_h a_2$.  
Define $\widehat{a_2}$ by
$\widehat{a_2} \lessdot_h a_1$.  Then $a_2 \leq_h \widehat{a_2}$ and
so
\begin{equation}\label{eq:chain}
[a_1, b_1] \leqs [a_2, b_2] \leq_s [\widehat{a_2}, b_2].
\end{equation}However, (3) and (4) of Lemma~\ref{lem:onlycovers} imply that
$[a, b] \not\leqs [a', b']$ if $a' \lessdot_h a$, regardless of the relationship between
$b$ and $b'$; this contradicts \eqref{eq:chain} .  Similarly, if we assume that $b_1 \not\leq_w b_2$, 
we can use (1) and (2) of Lemma~\ref{lem:onlycovers} to arrive at a contradiction.
\end{proof}

As an example, notice that in Figure~\ref{fig:rects} the chains 
$5 <_h 1 <_h 4 <_h 2 <_h 3$ and $6 <_w 1 <_w 5 <_w 2 <_w 4 <_w 3$ determine the order relations.

\begin{corollary}
The cover relations in $\PNl$ are given by
$$\begin{array}{rcll}
[a, N-\ell]& \lessdot_s &[a+1, N-\ell] &\mbox{ for }1\leq a\leq \left\lfloor \frac{\ell-1}{2}\right\rfloor-1, \\  
\ [a_1, b] &  \lessdot_s &[a_2, b] &\mbox{ for } a_1 \lessdot_h a_2 \mbox{ and } b < N-\ell, 
\end{array}$$
and
$$\begin{array}{rcll}
[\ell-1, b]&\lessdot_s &[\ell-1, b+1] &\mbox{ for }1\leq b\leq \left\lfloor \frac{N-\ell}{2}\right\rfloor -1,\\
\ [a, b_1]&\lessdot_s &[a, b_2] &\mbox{ for } b_1 \lessdot_w b_2 \mbox{ and } a < \ell-1, 
\end{array}$$
and
$$
[\ell-1, N-\ell]  \lessdot_s  [1, N-\ell],\, [\ell-1,1].
$$
\end{corollary}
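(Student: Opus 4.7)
The plan is to deduce the cover relations from the complete description of $\leqs$ in Theorem~\ref{th:bigdiff}.  By that theorem, $\PNl$ is a subposet of the grid $(\{1,\ldots,\ell-1\},\leq_h) \times (\{1,\ldots,N-\ell\},\leq_w)$ with the componentwise order; its elements are those enumerated in Proposition~\ref{prop:rectanglelabel}.  Since cover relations in a subset of such a grid are determined entirely by which grid intermediates are missing from the subset, the argument reduces to a careful case analysis.

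First I would establish that every cover fixes at least one coordinate.  Suppose to the contrary that $[a_1,b_1] \lessdot_s [a_2,b_2]$ with $a_1 <_h a_2$ and $b_1 <_w b_2$ both strict.  The grid points $[a_1,b_2]$ and $[a_2,b_1]$ then lie strictly between the two endpoints, so the cover condition forces both to be absent from $\PNl$.  A short case analysis using Proposition~\ref{prop:rectanglelabel} shows the only combination of regimes that makes both absent is $[a_1,b_1] = [\ell-1,N-\ell]$, since $\ell-1$ is the $\leq_h$-minimum and $N-\ell$ is the $\leq_w$-minimum.  But then $[1,N-\ell]$ and $[\ell-1,1]$ both lie in $\PNl$ (assuming $\ell, N-\ell \geq 2$) and at least one of them is a strict intermediate between $[\ell-1,N-\ell]$ and $[a_2,b_2]$, contradicting the cover condition.

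The remaining work enumerates the fixed-coordinate covers.  Fix $b$ and let $a$ vary.  If $b \leq \lfloor (N-\ell)/2 \rfloor$, every $a \in \{1,\ldots,\ell-1\}$ is valid, so the covers in $\PNl$ with this fixed $b$ match the covers of the full total order $\leq_h$; if $\lfloor(N-\ell)/2\rfloor < b < N-\ell$, the value $a=\ell-1$ is absent but all others are present, so the covers are the covers of $\leq_h$ restricted to $\{1,\ldots,\ell-2\}$.  Both subcases together yield bullet~2.  When $b = N-\ell$, the valid $a$-values reduce to $\{1,\ldots,\lfloor(\ell-1)/2\rfloor\} \cup \{\ell-1\}$, and the $\leq_h$ order restricted to these values becomes $\ell-1 <_h 1 <_h 2 <_h \cdots <_h \lfloor(\ell-1)/2\rfloor$, producing bullet~1 together with $[\ell-1,N-\ell] \lessdot_s [1,N-\ell]$.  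A fully symmetric argument fixing $a$ and varying $b$ produces bullets~3 and~4 along with $[\ell-1,N-\ell] \lessdot_s [\ell-1,1]$.

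The main subtlety is the first step, since it is tempting to imagine covers in which both coordinates strictly advance.  The key observation is that such a putative cover can only live at the minimum element $[\ell-1,N-\ell]$, and even there the existence of $[1,N-\ell]$ and $[\ell-1,1]$ prevents it.  The rest is essentially bookkeeping across the four boundary regimes of Proposition~\ref{prop:rectanglelabel}, entirely powered by Theorem~\ref{th:bigdiff}.
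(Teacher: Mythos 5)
Your proposal is correct and follows the same route as the paper, which simply cites Theorem~\ref{th:bigdiff} for the generic covers and Proposition~\ref{prop:rectanglelabel} for the boundary cases $a=\ell-1$ or $b=N-\ell$; you have merely filled in the details of that two-sentence argument, including the worthwhile verification that no cover can advance both coordinates. The bookkeeping across the boundary regimes checks out.
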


\begin{proof}
For the covering relations
involving $[a,b]$ where $a=\ell-1$ or $b=N-\ell$, 
Proposition~\ref{prop:rectanglelabel} is relevant.  
Otherwise, the cover relations follow directly 
from Theorem~\ref{th:bigdiff}.
\end{proof}

If $[a,b], [c,d]\in \PNl$ then we denote their meet and join  with respect to $<_s$ by $[a,b]\wedge _s [c,d]$ and $[a,b]\vee _s [c,d]$, respectively.

\begin{corollary}\label{cor:meetjoin}
The poset $\PNl$ is a lattice, and for $[a,b], [c,d]\in \PNl$ we have
\[
[a,b]\vee _s [c,d] = [a\vee _h c, b\vee _w d].
\]
\[
[a,b]\wedge _s [c,d] = \left\{
\begin{array}{ll}
[a\wedge_h c, N-\ell-(b \wedge_w d)] & \mbox{if $a\wedge_h c = \ell-1$ and $b \wedge_w d > \frac{N-\ell}{2}$}, \\

[\ell-1-(a\wedge_h c), b\wedge_w d ] & \mbox{if $b \wedge_w d = N-\ell$ and $a \wedge_h c > \frac{\ell-1}{2}$},\\

[a\wedge _h c, b\wedge _w d]  & \mbox{otherwise}.
\end{array}
\right.
\]
\end{corollary}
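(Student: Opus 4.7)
The plan is to reduce everything to Theorem~\ref{th:bigdiff}, which identifies $\leq_s$ on $\PNl$ with the product order coming from the total orders $\leq_h$ and $\leq_w$ on the two index coordinates. Because a product of two chains is automatically a lattice with coordinate-wise meets and joins, the corollary reduces to reconciling these coordinate-wise operations with the Section~\ref{sec:main} labelling conventions and the restriction (Proposition~\ref{prop:rectanglelabel}) that $\PNl$ omits two boundary strips of index pairs.

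For the join I would first verify that the coordinate-wise pair $(a\vee_h c,\,b\vee_w d)$ always labels an element of $\PNl$. The key observation is that $\ell-1$ is the $\leq_h$-minimum and $N-\ell$ is the $\leq_w$-minimum, so $a\vee_h c=\ell-1$ forces $a=c=\ell-1$, and $b\vee_w d=N-\ell$ forces $b=d=N-\ell$. Both special cases are easily absorbed by Proposition~\ref{prop:rectanglelabel} via a short inspection of how $\vee_h$ and $\vee_w$ act on the restricted index sets $\{1,\ldots,\lfloor(\ell-1)/2\rfloor\}\cup\{\ell-1\}$ and $\{1,\ldots,\lfloor(N-\ell)/2\rfloor\}\cup\{N-\ell\}$. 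Once validity is in hand, the join formula is immediate from Theorem~\ref{th:bigdiff}, since any upper bound $[e,f]$ must satisfy $e\geq_h a,c$ and $f\geq_w b,d$.

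For the meet, the coordinate-wise pair $(a\wedge_h c,\,b\wedge_w d)$ labels a well-defined ribbon Schur function but may fall into one of two forbidden strips: $\{(\ell-1,b):\lfloor(N-\ell)/2\rfloor<b<N-\ell\}$ or $\{(a,N-\ell):\lfloor(\ell-1)/2\rfloor<a<\ell-1\}$. These strips are disjoint (the first requires $b<N-\ell$, the second requires $b=N-\ell$), so at most one correction branch of the stated formula triggers. In the first forbidden case, the Section~\ref{sec:main} identity $r[\ell-1,b]=r[\ell-1,N-\ell-b]$ relabels the pair as $[\ell-1,\,N-\ell-(b\wedge_w d)]$; the second forbidden case is handled symmetrically via $r[a,N-\ell]=r[\ell-a-1,N-\ell]$; otherwise the pair is already canonical. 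I would then invoke Theorem~\ref{th:bigdiff} in each branch to confirm that the relabelled element is both a lower bound of $[a,b]$ and $[c,d]$ and greatest among such lower bounds in $\PNl$.

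The main obstacle is the verification in the first branch that $N-\ell-(b\wedge_w d)\leq_w b$ and $N-\ell-(b\wedge_w d)\leq_w d$, i.e.\ that the antipodal relabelling does not break the lower-bound property (and the symmetric check in the second branch). This reduces to position-counting in the chain $<_w$, made tractable by the fact that $a\wedge_h c=\ell-1$ forces at least one of $[a,b],[c,d]$ to have its second coordinate already constrained to $\{1,\ldots,\lfloor(N-\ell)/2\rfloor\}\cup\{N-\ell\}$. Once these inequalities are in place, the maximality among lower bounds follows by the same product-order argument used for the join.
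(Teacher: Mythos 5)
Your proposal is correct and is essentially the argument the paper intends: the corollary is stated there without proof as a direct consequence of Theorem~\ref{th:bigdiff}, and your reduction to coordinate-wise operations in the product of the chains $<_h$ and $<_w$ --- checking that the join never lands in a forbidden boundary strip (since $a\vee_h c=\ell-1$ forces $a=c=\ell-1$, etc.), while the meet may need relabelling via $r[\ell-1,b]=r[\ell-1,N-\ell-b]$ or $r[a,N-\ell]=r[\ell-a-1,N-\ell]$ --- is exactly the verification being left to the reader. The one step to spell out in the write-up is maximality for the meet in the correction branches: there the coordinate-wise meet itself is not a canonical label, so you need the position count showing that, e.g., $N-\ell-(b\wedge_w d)$ is the immediate $<_w$-predecessor of $b\wedge_w d$, whence every canonical lower bound's second coordinate, being strictly $<_w$-below $b\wedge_w d$, is automatically $\leq_w N-\ell-(b\wedge_w d)$; this is slightly more than the ``same product-order argument used for the join.''
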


\section{Concluding remarks}\label{sec:conclusion}

\begin{remark}\label{rem:schubert}
If an element $\sigma\in H^\ast(Gr(\ell,\mathbb{C}^{N+1}), \mathbb{Z})$, the cohomology ring of the Grassmannian of $\ell$-dimensional subspaces in $\mathbb{C}^{N+1}$, can be written as a non-negative linear combination of Schubert classes then we say $\sigma$ is Schubert positive. By the discussion in, say,  \cite[Section 4]{Gutschwager} or \cite[Section 4]{KWvW} it follows that the difference of products
\begin{equation}\label{eq:schubert}
\sigma _{(a^b)}\cdot \sigma_{((N-\ell)^{\ell-a-1},(N-\ell-b)^a)}-
\sigma _{(c^d)}\cdot \sigma_{((N-\ell)^{\ell-c-1},(N-\ell-d)^c)}
\end{equation}
is Schubert positive if and only if
$$r[a,b]-r[c,d]$$is Schur positive for $r[a,b],r[c,d]\in \PNl$. Consequently, whether the difference in \eqref{eq:schubert} is Schubert positive or not is completely determined by Theorem~\ref{th:bigdiff}.
The reader may wish to compare the Schubert classes appearing in the first
term in \eqref{eq:schubert} with Figure~\ref{fig:indexing}: $(a^b)$ is clearly the shape 
of the shaded rectangle, while $((N-\ell)^{\ell-a-1},(N-\ell-b)^a)$ is the shape of the other shaded
region rotated 180 degrees.
\end{remark}

\begin{remark}\label{rem:trim}
It is natural to ask what lattice-theoretic properties the poset $\PNl$ possesses.  
As already observed, $\PNl$ has well-defined meet and join operations and so is a lattice.
On the other hand, for example 
from Figure~\ref{fig:rects}, it is clear that $\PNl$ is not graded.
This is caused by, for example, the ribbon $111171$ in $\multfreeP{12}{6}$ 
being equivalent to the ribbon 
$171111$.  If this type of equivalence did not occur in $\PNl$, then the poset that would result would 
be exactly a product of two chains.   It is for this reason that we state in the introduction that 
$\PNl$ is only a slight modification of a product of two chains.

Since $\PNl$ is not graded, it is certainly not distributive.  However, \emph{trim} lattices are introduced
in \cite{Tho} as an ungraded analogue
of distributive lattices, and are a stronger version of \emph{extremal}
lattices defined in \cite{Mar}.  A lattice is said to be trim if it has a maximal chain of
$m+1$ left modular elements, exactly $m$ join-irreducibles, and exactly $m$ meet-irreducibles.
One can show that $\PNl$ is trim with $m=N-3$, and that any element on a chain of maximum length is left modular.

The \emph{spine} of a trim lattice $L$ consists of those elements of $L$ which lie on some chain of
$L$ of maximum length.  It is shown in \cite{Tho} that the spine of a trim lattice $L$ is a 
distributive sublattice of $L$, as is clearly seen to be the case for $\multfreeP{12}{6}$ in Figure~\ref{fig:rects}.
\end{remark}

Remark~\ref{rem:trim}  serves as a fitting conclusion: despite the fact that the Schur positivity order
$\bigP{N}$ seems unstructured, the poset of multiplicity-free ribbons $\PNl$ has an
appealing and intelligible form.

\end{document}